\pgfplotsset{compat=1.18}
\newtheorem{theorem}{Theorem}[section]
\newtheorem{proposition}[theorem]{Proposition}
\newtheorem{corollary}[theorem]{Corollary}
\newtheorem{lemma}[theorem]{Lemma}
\newtheorem{remark}[theorem]{Remark}
\numberwithin{equation}{section}
\theoremstyle{definition}
\title[]
 {An abstract uncertainty principle with applications}
\author{Tianxiao Huang, Ze Li and Jiani Liu}
\address{Tianxiao Huang, School of Mathematics (Zhuhai), Sun Yat-sen University, Zhuhai, Guangdong 519082, China}
\email{htx5@mail.sysu.edu.cn}
\address{Ze Li, School of Mathematics and Statistics, Ningbo University, Ningbo, Zhejiang 315211, P.R. China }
\email{rikudosennin@163.com}
\address{Jiani Liu, School of Mathematics (Zhuhai), Sun Yat-sen University, Zhuhai, Guangdong 519082, China}
\email{liujn68@mail2.sysu.edu.cn}
\subjclass[2020]{81S07,42A38,37L50}
\keywords{Uncertainty principle, Fourier transform, dispersive equations}
\begin{document}

\begin{abstract}
Under Wigdersons' framework and by sorting out the technical points in the recent works of Tang (\textit{J. Fourier Anal. Appl.} \textbf{31} (2025)) and Dias-Luef-Prata (\textit{J. Math. Pures Appl. (9)} \textbf{198} (2025)), we prove an abstract uncertainty principle for functions in the $L^p$ setting. An immediate consequence is a new uncertainty principle for the Fourier transform, unifying and extending many existing results. More applications are shown for PDEs, including the moment growth estimates for some linear and nonlinear dispersive equations, and a type of weighted lower bound estimate for the spacetime moment of the Schr\"{o}dinger equation and heat equation inspired by the control theory.
\end{abstract}

\maketitle


\section{Introduction}\label{sec1}

\subsection{Wigdersons' Framework for the Uncertainty Principle}\

The phrase \textit{uncertainty principle} in mathematics often refers to the fact that \textit{a nonzero function in $\mathbb{R}^n$ and its Fourier transform cannot be too localized to some subset of $\mathbb{R}^n\times\mathbb{R}^n$}. Among these uncertainty principles, the first and most well-known one is the Heisenberg uncertainty principle reflected by the inequality
\begin{equation}\label{e1}
	\left\||x|f\right\|_{L^2(\mathbb{R}^n)}\left\||\xi|\hat{f}\right\|_{L^2(\mathbb{R}^n)}\gtrsim_n\|f\|_{L^2(\mathbb{R}^n)}^2,\quad f\in\mathscr{S}(\mathbb{R}^n),
\end{equation}
where $\hat{f}(\xi)=\int e^{-ix\cdot\xi}f(x)dx$ is the Fourier transform of $f$. Despite of the optimal constant, the classical proofs to \eqref{e1} are closely related to two facts. First, the Fourier transform maps $i^{-1}\partial_jf$ to $\xi_j\hat{f}$, which turns \eqref{e1} into
\begin{equation}\label{e2}
	\left\||x|f\right\|_{L^2(\mathbb{R}^n)}\left\|\nabla f\right\|_{L^2(\mathbb{R}^n)}\gtrsim_n\|f\|_{L^2(\mathbb{R}^n)}^2,\quad f\in\mathscr{S}(\mathbb{R}^n).
\end{equation}
Second and considerably, the canonical commutation relation $[\partial_j,x_j]=1$ helps the proof of \eqref{e2} either in a concrete way using integration by parts, or in the view of Hilbert space techniques.

However in 2021, Avi Wigderson and Yuval Wigderson \cite{WW} found a way around such point of view, and established a new framework of showing how inequalities like \eqref{e1} can hold, even when $\hat{f}$ is replaced by $Af$ where $A$ is a linear operator.

The framework is a two-step mechanism, and we give a short review of it in the setting  of $A$ being a linear operator mapping functions in $\mathbb{R}^n$.

The first step is to prove a \textit{primary uncertainty principle} for $A$, which may appear in the form of a norm inequality
\begin{equation}\label{e3}
	\frac{\|f\|_{L^p(\mathbb{R}^n)}}{\|f\|_{L^q(\mathbb{R}^n)}}\frac{\|Af\|_{L^p(\mathbb{R}^n)}}{\|Af\|_{L^q(\mathbb{R}^n)}}\gtrsim1.
\end{equation}
If we denote
\begin{equation*}
	H^{(0)}(f)=\frac{\|f\|_{L^p(\mathbb{R}^n)}}{\|f\|_{L^q(\mathbb{R}^n)}},
\end{equation*}
and view $H^{(0)}(f)$ as a kind of \textit{measurement of localization} of $f$, then \eqref{e3} reads
\begin{equation}\label{e4}
	H^{(0)}(f)H^{(0)}(Af)\gtrsim1,
\end{equation}
and this is why \eqref{e3} can be called as an uncertainty principle.

The second step is to compare some other measurement of localization $H^{(1)}(f)$ with $H^{(0)}(f)$. If we can prove for all reasonable $f$ and some $\kappa>0$ that
\begin{equation*}
	H^{(1)}(f)\gtrsim\left(H^{(0)}(f)\right)^\kappa,
\end{equation*}
then combining \eqref{e4}, we have an \textit{upgraded uncertainty principle} for $A$
\begin{equation*}
	H^{(1)}(f)H^{(1)}(Af)\gtrsim1.
\end{equation*}
Note that the second step has nothing essential to do with $A$.

Wigdersons in \cite{WW} have already shown when $n=1$ that such framework and its variants can be used to prove the Heisenberg uncertainty principle \eqref{e1}, as well as many "Heisenberg-like" uncertainty principles. More concretely, denoted by
\begin{equation*}
	H^{(0)}_{q}(f)=\frac{\|f\|_{L^1(\mathbb{R})}}{\|f\|_{L^q(\mathbb{R})}},\quad H^{(1)}_{r,q}(f)=\frac{\||x|^rf\|_{L^2(\mathbb{R})}}{\|f\|_{L^q(\mathbb{R})}},
\end{equation*}
and assuming $A$ is a $k$-Hadamard operator which satisfies
\begin{equation}\label{e5}
	\|A\|_{1\rightarrow\infty}\lesssim1\quad\text{and}\quad\|A^*Af\|_{\infty}\geq k\|f\|_{\infty}\text{ for $f\in L^1(\mathbb{R})$ with $Af\in L^1(\mathbb{R})$},
\end{equation}
a primary uncertainty principle for $A$ was first shown:
\begin{equation}\label{e6}
	H^{(0)}_q(f)H^{(0)}_q(Af)\geq k^{1-\frac1q},\quad\text{if }1\leq q\leq\infty,
\end{equation}
and then the following comparison of localization was also proved:
\begin{equation}\label{e7}
	(H^{(1)}_{r,q}(f))^\frac{2q-2}{2qr+q-2}\gtrsim H^{(0)}_q(f),\quad\text{if }1<q\leq\infty,\,\mbox{$\frac12$}<r<\infty.
\end{equation}
These together upgrade the uncertainty principle for $A$ (see \cite[Theorem 4.14]{WW}):
\begin{equation}\label{e8}
	\left(H^{(1)}_{r,q}(f)\right)^\frac{2q-2}{2qr+q-2}\left(H^{(1)}_{s,q}(Af)\right)^\frac{2q-2}{2qs+q-2}\gtrsim k^{1-\frac1q},\quad\text{if }q\in(1,\infty],\,\mbox{$\frac12$}<r,s<\infty.
\end{equation}
In particular, when $A=\mathscr{F}$, $r=s=1$ and $q=2$, \eqref{e8} recovers the Heisenberg uncertainty \eqref{e1} when $n=1$.

Due to a technical problem in proving a higher dimensional comparison result like \eqref{e7}, the authors of \cite{WW} pointed out that their proof does not work when $n>1$, and raised the problem that if one can still prove the higher dimensional Heisenberg uncertainty principle under their framework. This was later resolved independently in Tang \cite{T24,T25} and in Dias-Luef-Prata \cite{DLP}, where the starting point is that, if $A$ is the Fourier transform $\mathscr{F}$, then replacing \eqref{e5} by the Hausdorff-Young inequality $\|\mathscr{F}\|_{p\rightarrow p'}\lesssim1$ for $\mathscr{F}$ and $\|\mathscr{F}^{-1}\|_{p\rightarrow p'}\lesssim1$ ($\frac1p+\frac{1}{p'}=1$) for the inverse will first give many more interesting primary uncertainty principles than \eqref{e6} if the H\"{o}lder's inequality could be properly used, such that the comparison of localization $H_{1,2}^{(1)}(f)$ and $H_2^{(0)}(f)$ is possible in higher dimensions. Such observation yields many new uncertainty principles for the Fourier transform. For example, Tang in \cite{T24,T25} showed for all $n\in\mathbb{N}_+$ and $p\in(1,\frac{2n}{n-1})$ that
\begin{equation}\label{e1.9}
	\frac{\left\||x|f\right\|_{L^p(\mathbb{R}^n)}}{\|f\|_{L^p(\mathbb{R}^n)}}\frac{\left\||\xi|\hat{f}\right\|_{L^p(\mathbb{R}^n)}}{\|\hat{f}\|_{L^p(\mathbb{R}^n)}}\gtrsim1,\quad0\neq f\in L^p\cap\mathscr{F}^{-1}L^p,
\end{equation}
which clearly implies the Heisenberg uncertainty principle \eqref{e1} in any dimension by taking $p=2$.

The same logic applies if a linear operator $A$ satisfies the Hausdorff-Young type inequality $\|A\|_{p\rightarrow p'}\lesssim1$ as well as $\|A^{-1}\|_{p\rightarrow p'}\lesssim1$ for the inverse, and this was the major concern in Dias-Luef-Prata \cite{DLP} where the authors raised a condition called \textit{special $k$- Hadamard operator} (see \cite[Definition 2]{DLP}) to insure these two estimates. A lot of interesting results were obtained in \cite{DLP}, for example various uncertainty principles in the form of
\begin{equation*}
	\left\||x|^\theta f\right\|_{L^p(\mathbb{R}^n)}^\alpha\left\||\xi|^\phi Af\right\|_{L^q(\mathbb{R}^n)}^\beta\gtrsim\|f\|_{L^r(\mathbb{R}^n)}^\alpha\|Af\|_{L^r(\mathbb{R}^n)}^\beta,
\end{equation*}
for various values of $\alpha,\beta,\theta,\phi,p,q,r>0$. We will discuss this and make a comparison with our results later.

\noindent\textbf{Notations}

We denote $A\lesssim B$ if there exists a constant $C>0$ such that $A\leq CB$, and denote $A\lesssim_{\alpha}B$ if $C$ depends on parameter $\alpha$. We denote $A\sim B$ if $A\lesssim B$ and $B\lesssim A$. For $p\in[1,\infty]$, we use $p'$ to denote the conjugate number of $p$ satisfying $\frac1p+\frac{1}{p'}=1$. We use $\mathscr{S}(\mathbb{R}^n)$ to denote the Schwartz class in $\mathbb{R}^n$, and use $\hat{f}(\xi)=\int_{\mathbb{R}^n}e^{-ix\cdot\xi}f(x)dx$ to define the Fourier transform of $f$. We also use $\mathscr{F}$ and $\mathscr{F}^{-1}$ to denote the Fourier transform and its inverse.

Let $\varphi(\xi)$ be a radial bump function supported in the ball $\{\xi\in\mathbb{R}^n;|\xi|\leq\frac{11}{10}\}$ and equal to $1$ on the ball $\{\xi\in\mathbb{R}^n;|\xi|\leq1\}$. For each number $N>0$, we define the Littlewood-Paley projection $P_N$ via the Fourier multiplier
\begin{equation*}
	\widehat{P_Nf}(\xi)=(\varphi(\xi/N)-\varphi(2\xi/N))\hat{f}(\xi).
\end{equation*}

\subsection{Motivation and Main Result}\

The purpose of the present paper is to maximize the utilization of the techniques developed in Tang \cite{T24,T25} and Dias-Luef-Prata \cite{DLP} under Wigdersons' framework, and give a more general uncertainty principle for functions in the $L^p$ setting.

Before introducing the main result, we make a brief regarding the different consideration in this paper. Instead of considering a linear operator $A$ in $\mathbb{R}^n$ who and whose inverse satisfy the Hausdorff-Young type inequality, we will just consider two functions $f_1$ and $f_2$ defined in $\mathbb{R}^{n_1}$ and $\mathbb{R}^{n_2}$ respectively with maybe different dimensions, satisfying
\begin{equation}\label{e2.4}
	\|f_2\|_{L^{m_2}(\mathbb{R}^{n_2})}\leq C_1\|f_1\|_{L^{q_1}(\mathbb{R}^{n_1})},\quad\|f_2\|_{L^{q_2}(\mathbb{R}^{n_2})}\geq C_2\|f_1\|_{L^{m_1}(\mathbb{R}^{n_1})}.
\end{equation}
When $f_2=Af_1$ where $A$ is a linear operator, these two estimates are the boundedness of $A$ and $A^{-1}$. Unlike the previous work \cite{DLP} where the boundedness were assumed to be the Hausdorff-Young type, here $m_1,m_2,q_1,q_2$ are unknown and will only be determined in every specific application. Note that these two estimates already give a primary uncertainty principle
\begin{equation}\label{e1.11}
	\frac{\|f_1\|_{L^{q_1}(\mathbb{R}^{n_1})}}{\|f_1\|_{L^{m_1}(\mathbb{R}^{n_1})}}\frac{\|f_2\|_{L^{q_2}(\mathbb{R}^{n_2})}}{\|f_2\|_{L^{m_2}(\mathbb{R}^{n_2})}}\geq\frac{C_2}{C_1}.
\end{equation}
In particular, the dependence between $f_1$ and $f_2$ is not necessarily linear, so our theory may serve nonlinear problems.

Our main result is the following uncertainty principle.

\begin{theorem}\label{thm1}
	For $i=1,2$, suppose $n_i\in\mathbb{N}_+$, $b_i,q_i\in(0,\infty)$ and $a_i,k_i,m_i\in(0,\infty]$ with
	\begin{equation}\label{e2.3}
		\mbox{$\frac{n_i}{a_i^{-1}n_i+b_i}$}<k_i\leq m_i,\quad\mbox{$\frac{n_i}{a_i^{-1}n_i+b_i}$}<q_i<m_i.
	\end{equation}
	If $0\neq f_i\in L^{q_i}(\mathbb{R}^{n_i})\cap L^{k_i}(\mathbb{R}^{n_i})\cap L^{m_i}(\mathbb{R}^{n_i})$ ($i=1,2$) satisfy \eqref{e2.4} for some $C_1,C_2>0$, then
	\begin{equation}\label{e2.5}
		\begin{split}
			&\left(\frac{\left\||\cdot-x_1|^{b_1}f_1\right\|_{L^{a_1}(\mathbb{R}^{n_1})}}{\|f_1\|_{L^{k_1}(\mathbb{R}^{n_1})}}\right)^{(\frac{1}{q_1}-\frac{1}{m_1})(\frac{1}{a_2}+\frac{b_2}{n_2}-\frac{1}{k_2})}\left(\frac{\left\||\cdot-x_2|^{b_2}f_2\right\|_{L^{a_2}(\mathbb{R}^{n_2})}}{\|f_2\|_{L^{k_2}(\mathbb{R}^{n_2})}}\right)^{(\frac{1}{q_2}-\frac{1}{m_2})(\frac{1}{a_1}+\frac{b_1}{n_1}-\frac{1}{k_1})}\\
			\gtrsim&_{n_i,a_i,b_i,q_i,k_i,m_i\,(i=1,2)}\left(\frac{C_2}{C_1}\right)^{(\frac{1}{a_1}+\frac{b_1}{n_1}-\frac{1}{k_1})(\frac{1}{a_2}+\frac{b_2}{n_2}-\frac{1}{k_2})},\quad x_i\in\mathbb{R}^{n_i},\,i=1,2.
		\end{split}
	\end{equation}
\end{theorem}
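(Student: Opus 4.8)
\noindent\emph{Proof plan.} The argument is the usual two-step mechanism. The \emph{primary uncertainty principle} is already recorded in \eqref{e1.11}: combining the two bounds in \eqref{e2.4} gives at once
\begin{equation*}
\frac{\|f_1\|_{L^{q_1}(\mathbb{R}^{n_1})}}{\|f_1\|_{L^{m_1}(\mathbb{R}^{n_1})}}\cdot\frac{\|f_2\|_{L^{q_2}(\mathbb{R}^{n_2})}}{\|f_2\|_{L^{m_2}(\mathbb{R}^{n_2})}}\geq\frac{C_2}{C_1}.
\end{equation*}
So the whole content of the theorem is a \emph{comparison of localization} for a single function, which I would isolate as follows: if $n\in\mathbb{N}_+$, $b,q\in(0,\infty)$, $a,k,m\in(0,\infty]$ satisfy $\frac{n}{a^{-1}n+b}<k\le m$ and $\frac{n}{a^{-1}n+b}<q<m$, if $0\neq f\in L^q\cap L^k\cap L^m(\mathbb{R}^n)$ and $x\in\mathbb{R}^n$, then
\begin{equation*}
\frac{\|f\|_{L^q(\mathbb{R}^n)}}{\|f\|_{L^m(\mathbb{R}^n)}}\lesssim_{n,a,b,q,k,m}\left(\frac{\left\||\cdot-x|^{b}f\right\|_{L^a(\mathbb{R}^n)}}{\|f\|_{L^k(\mathbb{R}^n)}}\right)^{\frac{1/q-1/m}{1/a+b/n-1/k}}.
\end{equation*}
Call this estimate $(\star)$; under the hypotheses $\frac1a+\frac bn-\frac1k>0$ and $\frac1q-\frac1m>0$, so its exponent is positive. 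Granting $(\star)$, Theorem~\ref{thm1} follows quickly: put $\beta_i=\frac1{a_i}+\frac{b_i}{n_i}-\frac1{k_i}>0$; if either $\left\||\cdot-x_i|^{b_i}f_i\right\|_{L^{a_i}(\mathbb{R}^{n_i})}=\infty$, then \eqref{e2.5} holds trivially because the two exponents on its left-hand side are positive; otherwise apply $(\star)$ to $f_1$ on $\mathbb{R}^{n_1}$ (center $x_1$) and to $f_2$ on $\mathbb{R}^{n_2}$ (center $x_2$), raise the two resulting inequalities to the power $\beta_1\beta_2$, multiply them, and combine with the primary uncertainty principle raised to the same power; this is exactly \eqref{e2.5}.

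To prove $(\star)$ I would translate so that $x=0$ and dispose of the trivial case $\left\||\cdot|^{b}f\right\|_{L^a}=\infty$. Since $(\star)$ is invariant under the dilation $f\mapsto f(\lambda\,\cdot)$, a ``ball-splitting plus optimization'' argument should reproduce exactly the stated exponent. The plan is to establish by this method two weighted Gagliardo--Nirenberg-type inequalities. For $R>0$ write $\int_{\mathbb{R}^n}|f|^s=\int_{|y|<R}|f|^s+\int_{|y|\ge R}|f|^s$; estimate the inner integral by H\"{o}lder in terms of a Lebesgue norm of $f$ of exponent $\ge s$ (namely $\|f\|_{L^k}$ or $\|f\|_{L^m}$) times a nonnegative power of $R$; and estimate the outer integral by a (multi-exponent) H\"{o}lder inequality that peels off $\left\||\cdot|^{b}f\right\|_{L^a}$ against the weight $|y|^{b}$, the essential point being that the residual radial integral $\int_{|y|\ge R}|y|^{-\sigma}\,dy$ \emph{converges} --- precisely because of the hypotheses $q>\frac{n}{a^{-1}n+b}$, respectively $k>\frac{n}{a^{-1}n+b}$ --- and so contributes a strictly negative power of $R$. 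Optimizing over $R$ then yields, taking $s=k$,
\begin{equation*}
\frac{\|f\|_{L^k(\mathbb{R}^n)}}{\|f\|_{L^m(\mathbb{R}^n)}}\lesssim\left(\frac{\left\||\cdot|^{b}f\right\|_{L^a(\mathbb{R}^n)}}{\|f\|_{L^k(\mathbb{R}^n)}}\right)^{(1/k-1/m)/(1/a+b/n-1/k)}
\end{equation*}
(trivial if $k=m$), and, taking $s=q$ in the range $q\le k$, the analogous inequality with $q$ and $k$ replacing $k$ and $m$ on the two sides. Using $\frac1q-\frac1m=(\frac1q-\frac1k)+(\frac1k-\frac1m)$ and multiplying these two inequalities gives $(\star)$ when $q\le k$; in the remaining range $k<q<m$ one instead applies the elementary interpolation $\|f\|_{L^q}\le\|f\|_{L^k}^{1-\eta}\|f\|_{L^m}^{\eta}$ with $\frac1q=\frac{1-\eta}{k}+\frac{\eta}{m}$ and then inserts the displayed bound for $\|f\|_{L^k}/\|f\|_{L^m}$.

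The step I expect to be the main obstacle is precisely this ball-splitting: the relative ordering of $a,q,k,m$ is not fixed by \eqref{e2.3}, so the exterior H\"{o}lder estimate genuinely needs a three-exponent version --- with the interpolation parameter chosen so that the power of the weight, and hence of $R$, lies in the admissible range --- whenever $a$ is small relative to $q$ or $k$, and one must verify in each configuration both that every power of $R$ comes out with the correct sign (so that the optimization over $R$ is valid) and that the resulting constants depend only on $n,a,b,q,k,m$, not on $f$ or $x$. The remaining ingredients --- the reduction to $x=0$, the dilation invariance that fixes the exponents, and the final assembly above --- are routine.
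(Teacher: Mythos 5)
Your plan is correct and sits inside the same two-step mechanism the paper uses: the primary uncertainty principle \eqref{e1.11} plus a single-function comparison of localization, and your $(\star)$ is exactly the inequality the paper derives in its proof, $\frac{\||\cdot-x|^{b}f\|_{L^{a}}}{\|f\|_{L^{k}}}\gtrsim\big(\frac{\|f\|_{L^{q}}}{\|f\|_{L^{m}}}\big)^{(\frac1a+\frac bn-\frac1k)/(\frac1q-\frac1m)}$; your final assembly (raising to the powers $\beta_1\beta_2$ and multiplying with \eqref{e1.11}) is also the paper's. Where you genuinely differ is in how $(\star)$ is proved. The paper factors it through Lemma \ref{lm1} and Lemma \ref{lm2}: a ball-splitting in which the radius is chosen explicitly so the ball carries half of $\|f\|_{L^p}^p$ (no optimization over $R$), applied with an auxiliary exponent $p\in(\frac{n}{a^{-1}n+b},\min\{a,k,q\}]$ and $s$ with $ps'=k$, followed by a ratio form of H\"older converting the $L^p/L^k$ ratio into the $L^q/L^m$ ratio. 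Since $p\leq a$ by construction, only a two-exponent H\"older against the weight is ever needed, so the case analysis you flag as the main obstacle (three-exponent H\"older when $a<k$ or $a<q$, and the split $q\leq k$ versus $k<q<m$) never arises. Your route does close: e.g.\ for your inequality (A) with $a<k\leq m$, writing $|f|^k=(|x|^b|f|)^{\lambda}|f|^{k-\lambda}|x|^{-b\lambda}$, the admissible window $\frac{n(1-k/m)}{b+n/a-n/m}<\lambda\leq\frac{a(m-k)}{m-a}$ is nonempty under \eqref{e2.3}, and scaling plus homogeneity force the optimized bound to have the stated exponents; also note that the $q\leq k$ case of $(\star)$ follows from (A) and (B) only after rewriting them in product form (with $\alpha=\frac1a+\frac bn$, $N_r=\|f\|_{L^r}$, $Z=\||\cdot|^bf\|_{L^a}$: $N_k^{\alpha-1/m}\lesssim N_m^{\alpha-1/k}Z^{1/k-1/m}$ and $N_q^{\alpha-1/k}\lesssim N_k^{\alpha-1/q}Z^{1/q-1/k}$), not by literally multiplying the two ratio inequalities. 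So there is no gap in substance, but the paper's auxiliary-exponent device buys a much shorter verification, while your direct optimization is self-contained at the price of the configuration-by-configuration bookkeeping you anticipated.
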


Next we go through the applications of this result including Fourier transform and Fourier transform-like operators, but the main focus will be on the the applications to partial differential equations.

\subsection{Uncertainty Principles for the Fourier transform}\

By Hausdorff-Young's inequality, $\|\hat{f}\|_{L^{p'}(\mathbb{R}^n)}\lesssim\|f\|_{L^p(\mathbb{R}^n)}$ and $\|\hat{f}\|_{L^p(\mathbb{R}^n)}\gtrsim\|f\|_{L^{p'}(\mathbb{R}^n)}$ hold for all $p\in[1,2]$. If we apply Theorem \ref{thm1} with $f_1=f$ and $f_2=\hat{f}$, then the following uncertainty principle for the Fourier transform follows as an immediate consequence of Corollary \ref{cor1} and Remark \ref{rk1} in Section \ref{sec2}.

\begin{theorem}\label{thm2}
	Let $n\in\mathbb{N}_+$, $a_i\in(0,\infty]$, $b_i\in(0,\infty)$ with $\frac{n}{a_i^{-1}n+b_i}<2$ ($i=1,2$), and suppose
	\begin{equation*}\small
		k_1\in\begin{cases}
			(\mbox{$\frac{n}{a_1^{-1}n+b_1}$},\infty],\quad&\text{if $\frac{n}{a_2^{-1}n+b_2}<1$},\\
			\left(\mbox{$\frac{n}{a_1^{-1}n+b_1},(\frac{n}{a_2^{-1}n+b_2})'$}\right),&\text{if $\frac{n}{a_2^{-1}n+b_2}\geq1$},
		\end{cases}\quad k_2\in\begin{cases}
			(\mbox{$\frac{n}{a_2^{-1}n+b_2}$},\infty],\quad&\text{if $\frac{n}{a_1^{-1}n+b_1}<1$},\\
			\left(\mbox{$\frac{n}{a_2^{-1}n+b_2},(\frac{n}{a_1^{-1}n+b_1})'$}\right),&\text{if $\frac{n}{a_1^{-1}n+b_1}\geq1$}.
		\end{cases}
	\end{equation*}
	Then for $0\neq f\in L^{k_1}(\mathbb{R}^n)\cap\mathscr{F}^{-1}L^{k_2}(\mathbb{R}^n)$ and $x_0,\xi_0\in\mathbb{R}^n$, we have
	\begin{equation*}
		\left(\frac{\left\||x-x_0|^{b_1}f\right\|_{L_x^{a_1}(\mathbb{R}^n)}}{\|f\|_{L^{k_1}(\mathbb{R}^n)}}\right)^{\frac{1}{a_2}+\frac{b_2}{n}-\frac{1}{k_2}}\left(\frac{\left\||\xi-\xi_0|^{b_1}\hat{f}\right\|_{L_\xi^{a_2}(\mathbb{R}^n)}}{\|\hat{f}\|_{L^{k_2}(\mathbb{R}^n)}}\right)^{\frac{1}{a_1}+\frac{b_1}{n}-\frac{1}{k_1}}\gtrsim_{n,a_i,b_i,k_i}1.
	\end{equation*}
\end{theorem}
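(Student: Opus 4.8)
The plan is to obtain Theorem~\ref{thm2} as a direct specialization of Theorem~\ref{thm1}, taking $f_1=f$, $f_2=\hat f$, $n_1=n_2=n$, keeping the exponents $a_1,a_2,k_1,k_2$ and the weights $b_1,b_2$, and letting the Hausdorff--Young inequality provide the two comparison estimates required in~\eqref{e2.4}. For $q\in[1,2]$ one has $\|\hat f\|_{L^{q'}}\lesssim_n\|f\|_{L^q}$, and applying the same inequality to $\mathscr F^{-1}$ gives $\|f\|_{L^{q'}}=\|\mathscr F^{-1}\hat f\|_{L^{q'}}\lesssim_n\|\hat f\|_{L^q}$. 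Hence for any $q_1,q_2\in[1,2]$ the two estimates in~\eqref{e2.4} hold with $m_2=q_1'$ and $m_1=q_2'$, with constants $C_1\sim_n1$ and $C_2\sim_n1$, so in particular $C_2/C_1\gtrsim_n1$. Once $q_1,q_2$ are fixed, $m_1,m_2$ are determined; the remaining freedom is the choice of the pair $(q_1,q_2)$.

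The key algebraic point is that with $m_1=q_2'$ and $m_2=q_1'$ the two exponents $\frac1{q_i}-\frac1{m_i}$ occurring in~\eqref{e2.5} coincide:
\[
\frac1{q_1}-\frac1{m_1}=\frac1{q_1}-\Bigl(1-\frac1{q_2}\Bigr)=\frac1{q_1}+\frac1{q_2}-1=\frac1{q_2}-\frac1{m_2}=:\beta,
\]
and the requirement $q_i<m_i$ from~\eqref{e2.3} is exactly $\beta>0$. Writing $s_i:=\frac{n}{a_i^{-1}n+b_i}$ and $\alpha_i:=\frac1{a_i}+\frac{b_i}{n}-\frac1{k_i}$, what is left of~\eqref{e2.3} is: $q_i\in[1,2]$, $q_i>s_i$, $k_i\le m_i$ (equivalently $\frac1{q_2}\ge1-\frac1{k_1}$ and $\frac1{q_1}\ge1-\frac1{k_2}$), and $k_i>s_i$ (already part of the hypotheses, and giving $\alpha_i>0$). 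The case distinction on $k_1,k_2$ in Theorem~\ref{thm2} is tailored precisely so that an admissible pair exists: take $q_i=1$ when $s_i<1$ and $q_i=s_i+\varepsilon$ when $s_i\ge1$; since $s_i<2$ forces $\frac1{s_1}+\frac1{s_2}>1$, for $\varepsilon>0$ small enough all of $q_i\in[1,2]$, $q_i>s_i$, $\beta>0$, $k_1\le q_2'$ and $k_2\le q_1'$ are met, the upper bounds $k_1<s_2'$ and $k_2<s_1'$ (with the convention $s'=\infty$ when $s\le1$) being exactly what makes $k_1\le q_2'$ and $k_2\le q_1'$ solvable.

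For such a choice Theorem~\ref{thm1} yields
\[
\left(\frac{\bigl\||x-x_0|^{b_1}f\bigr\|_{L^{a_1}}}{\|f\|_{L^{k_1}}}\right)^{\beta\alpha_2}\left(\frac{\bigl\||\xi-\xi_0|^{b_2}\hat f\bigr\|_{L^{a_2}}}{\|\hat f\|_{L^{k_2}}}\right)^{\beta\alpha_1}\gtrsim_{n,a_i,b_i,k_i}\left(\frac{C_2}{C_1}\right)^{\alpha_1\alpha_2}\gtrsim_{n,a_i,b_i,k_i}1,
\]
where we used $\alpha_1,\alpha_2>0$. The left-hand side is the $\beta$-th power of the product appearing in Theorem~\ref{thm2}, so since $\beta>0$ we may raise both sides to the power $1/\beta$ and read off the claimed inequality.

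The one genuinely delicate point is an integrability gap: Theorem~\ref{thm1} requires $f\in L^{q_1}\cap L^{k_1}\cap L^{m_1}$ and $\hat f\in L^{q_2}\cap L^{k_2}\cap L^{m_2}$, while Theorem~\ref{thm2} only assumes $f\in L^{k_1}$ and $\hat f\in L^{k_2}$. This is bridged as in Corollary~\ref{cor1} and Remark~\ref{rk1}: if either weighted norm $\||x-x_0|^{b_1}f\|_{L^{a_1}}$ or $\||\xi-\xi_0|^{b_2}\hat f\|_{L^{a_2}}$ is infinite, the left-hand side of the desired inequality is $+\infty$ (its denominators being finite and nonzero, since $f\neq0$ implies $\hat f\neq0$), so there is nothing to prove; hence we may assume both are finite. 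Then a routine H\"older interpolation --- splitting $\mathbb{R}^n$ into $\{|x-x_0|<1\}$ and $\{|x-x_0|\ge1\}$, using $f\in L^{k_1}$ on the former and $|x-x_0|^{b_1}f\in L^{a_1}$ on the latter, and likewise for $\hat f$ --- shows that $f\in L^p$ for every $p$ in the exponent range governed by $s_1,a_1,k_1$, a range that contains both $q_1$ and $m_1$, and similarly for $\hat f$. I expect this bookkeeping, together with confirming that the admissible window for $(q_1,q_2)$ is nonempty under the stated hypotheses on $k_1,k_2$, to be the only step needing real care; everything else is a formal consequence of Theorem~\ref{thm1} and the Hausdorff--Young inequality.
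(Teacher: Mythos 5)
Your route is essentially the paper's: the paper deduces Theorem~\ref{thm2} from Corollary~\ref{cor1} and Remark~\ref{rk1}, which are exactly your specialization of Theorem~\ref{thm1} with $q_1=p_1$, $q_2=p_2$, $m_1=q_2'$, $m_2=q_1'$, Hausdorff--Young supplying \eqref{e2.4} with $C_1\sim C_2\sim_n 1$; your choice $q_i=1$ when $\frac{n}{a_i^{-1}n+b_i}<1$ and $q_i=\frac{n}{a_i^{-1}n+b_i}+\varepsilon$ otherwise is precisely the existence claim in Remark~\ref{rk1}, and your exponent bookkeeping (the common value $\beta=\frac1{q_1}+\frac1{q_2}-1>0$, then raising to the power $1/\beta$) reproduces the factor $\frac{1}{\frac1{p_1}-\frac1{p_2'}}$ in \eqref{e2.7}. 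That part is correct.

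The place where your write-up overreaches is the integrability bridge. The splitting of $\mathbb{R}^n$ into $\{|x-x_0|<1\}$ and $\{|x-x_0|\ge1\}$ gives $f\in L^p(\mathbb{R}^n)$ only for $p\in\bigl(\frac{n}{a_1^{-1}n+b_1},k_1\bigr]$: on the inner region the hypothesis $f\in L^{k_1}$ yields nothing above the exponent $k_1$, so this range need not contain $q_1$ (e.g.\ if $k_1<1\le q_1$) and essentially never contains $m_1=q_2'\ge2$, contrary to what you assert. The correct repair is different: membership $f\in L^{m_1}$ should come from $\hat f\in L^{q_2}$ via Hausdorff--Young for $\mathscr{F}^{-1}$ (and symmetrically $\hat f\in L^{m_2}$ from $f\in L^{q_1}$), while $f\in L^{q_1}$, $\hat f\in L^{q_2}$ must be arranged by choosing $q_1\le k_1$ and $q_2\le k_2$ (possible since $k_i>\frac{n}{a_i^{-1}n+b_i}$), or, when say $k_1<1$, by first getting $\hat f\in L^{q_2'}$-type information from the other side and interpolating. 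This patch works whenever $k_1$ and $k_2$ are not both below $1$; in the remaining case $k_1,k_2<1$ neither your argument nor any visible argument supplies the $L^{q_i}$ integrability needed to invoke Theorem~\ref{thm1} --- a point the paper itself passes over in silence by calling Theorem~\ref{thm2} an ``immediate consequence'' of Corollary~\ref{cor1} and Remark~\ref{rk1}, whose implicit hypotheses ($f\in L^{p_1}\cap L^{p_2'}$, $\hat f\in L^{p_2}\cap L^{p_1'}$) are never verified from $f\in L^{k_1}\cap\mathscr{F}^{-1}L^{k_2}$ alone. So relative to the paper your proposal is the same proof, more honest about the gap but with the bridging claim stated incorrectly; fix it as above.
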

\begin{remark}
This theorem as well as its corollaries below also hold when $\hat{f}$ is replaced by $Af$ where $A$ is a linear operator with $\|Af\|_{L^{p'}(\mathbb{R}^n)}\lesssim\|f\|_{L^p(\mathbb{R}^n)}$ and $\|Af\|_{L^p(\mathbb{R}^n)}\gtrsim\|f\|_{L^{p'}(\mathbb{R}^n)}$ for all $p\in[1,2]$, i.e. $A$ and $A^{-1}$ both satisfy the Hausdorff-Young type inequality in the full range.
\end{remark}

For a comparison with those results proved in Dias-Luef-Prata \cite{DLP} and Tang \cite{T24,T25}, we explore some special cases where notations are adapted to those used in \cite{DLP,T24,T25}, and we always assume $x_0=\xi_0=0$.

The first special case is to consider $k_1=k_2$. In Theorem \ref{thm2}, let $a_1=p$, $b_1=\theta$, $a_2=q$, $b_2=\phi$ and $k_1=k_2=r$, we have the follow result generalizing the Cowling-Price uncertainty principle.

\begin{corollary}\label{cor2}
	Let $n\in\mathbb{N}_+$, $\theta,\phi\in(0,\infty)$ and $p,q,r\in(0,\infty]$ with
	\begin{itemize}
		\item[(1)] $\frac{\theta}{n}>\frac{1}{\min\{2,r\}}-\frac1p$, $\frac{\phi}{n}>\frac{1}{\min\{2,r\}}-\frac1q$;
		
		\item[(2)] $r<(\frac{n}{p^{-1}n+\theta})'$ whenever $\frac{n}{p^{-1}n+\theta}\geq1$;
		
		\item[(3)] $r<(\frac{n}{q^{-1}n+\phi})'$ whenever $\frac{n}{q^{-1}n+\phi}\geq1$.
	\end{itemize}
	Then
	\begin{equation}\label{e3.2}\small
		\left(\frac{\left\||x|^\theta f\right\|_{L^p(\mathbb{R}^n)}}{\|f\|_{L^r(\mathbb{R}^n)}}\right)^{\phi-n(\frac1r-\frac1q)}\left(\frac{\left\||\xi|^\phi\hat{f}\right\|_{L^q(\mathbb{R}^n)}}{\|\hat{f}\|_{L^r(\mathbb{R}^n)}}\right)^{\theta-n(\frac1r-\frac1p)}\gtrsim_{n,p,q,r,\theta,\phi}1,\quad0\neq f\in L^r\cap\mathscr{F}^{-1}L^r.
	\end{equation}
\end{corollary}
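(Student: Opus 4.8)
The plan is to read off Corollary~\ref{cor2} as a specialization of Theorem~\ref{thm2}; there is no new analysis, only a matching of exponent conditions. In Theorem~\ref{thm2} I would set
\[
a_1=p,\qquad b_1=\theta,\qquad a_2=q,\qquad b_2=\phi,\qquad k_1=k_2=r,\qquad x_0=\xi_0=0 ,
\]
so that the ambient requirement $0\neq f\in L^{k_1}(\mathbb{R}^n)\cap\mathscr{F}^{-1}L^{k_2}(\mathbb{R}^n)$ becomes the standing hypothesis $0\neq f\in L^{r}\cap\mathscr{F}^{-1}L^{r}$ (the Hausdorff--Young bounds for $\mathscr F$ and $\mathscr F^{-1}$ on $[1,2]$ needed to invoke Theorem~\ref{thm2} are recalled just before its statement).

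The main point to verify is that hypotheses (1)--(3) of the corollary coincide with the hypotheses of Theorem~\ref{thm2} under this substitution. The constraint $\frac{n}{a_i^{-1}n+b_i}<2$ becomes $\frac{\theta}{n}>\frac12-\frac1p$ and $\frac{\phi}{n}>\frac12-\frac1q$; the lower endpoints of the admissible ranges, $k_1>\frac{n}{a_1^{-1}n+b_1}$ and $k_2>\frac{n}{a_2^{-1}n+b_2}$, become $\frac{\theta}{n}>\frac1r-\frac1p$ and $\frac{\phi}{n}>\frac1r-\frac1q$; combining the two bounds in each case and using $\max\{\frac12,\frac1r\}=\frac{1}{\min\{2,r\}}$ yields precisely item~(1). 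The upper endpoints of the ranges for $k_1,k_2$ in Theorem~\ref{thm2} are imposed only conditionally: $k_1<\big(\frac{n}{a_2^{-1}n+b_2}\big)'$ when $\frac{n}{a_2^{-1}n+b_2}\geq1$, which under the substitution is item~(3), and $k_2<\big(\frac{n}{a_1^{-1}n+b_1}\big)'$ when $\frac{n}{a_1^{-1}n+b_1}\geq1$, which is item~(2); when the relevant quantity is $<1$, Theorem~\ref{thm2} puts no upper bound on $k_i$, consistent with the conditional clauses in (2) and (3).

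With the hypotheses checked, Theorem~\ref{thm2} then yields
\[
\left(\frac{\left\||x|^\theta f\right\|_{L^p}}{\|f\|_{L^r}}\right)^{\frac1q+\frac\phi n-\frac1r}\left(\frac{\left\||\xi|^\phi\hat f\right\|_{L^q}}{\|\hat f\|_{L^r}}\right)^{\frac1p+\frac\theta n-\frac1r}\gtrsim_{n,p,q,r,\theta,\phi}1 .
\]
Since $n\geq1$, raising both sides to the power $n$ preserves the inequality (the implicit constant is merely raised to the same power), and using $n\big(\tfrac1q+\tfrac\phi n-\tfrac1r\big)=\phi-n\big(\tfrac1r-\tfrac1q\big)$ and $n\big(\tfrac1p+\tfrac\theta n-\tfrac1r\big)=\theta-n\big(\tfrac1r-\tfrac1p\big)$ turns it into \eqref{e3.2}. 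I do not expect any genuine obstacle: the only delicate point is the bookkeeping in the middle paragraph --- in particular, seeing that item~(1) simultaneously encodes the two distinct requirements of Theorem~\ref{thm2}, namely $\frac{n}{a_i^{-1}n+b_i}<2$ and the lower endpoint $k_i>\frac{n}{a_i^{-1}n+b_i}$, and keeping straight which conjugate-exponent constraint comes from the $f$-side and which from the $\hat f$-side.
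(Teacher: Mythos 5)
Your proposal is correct and is essentially the paper's own derivation: Corollary \ref{cor2} is obtained there precisely by setting $a_1=p$, $b_1=\theta$, $a_2=q$, $b_2=\phi$, $k_1=k_2=r$ in Theorem \ref{thm2}, and your bookkeeping (that (1) merges the $<2$ constraint with the lower endpoints for $k_i$ via $\frac{1}{\min\{2,r\}}$, that (2)–(3) are the cross conjugate-exponent caps, and that raising to the power $n$ converts the exponents into $\phi-n(\frac1r-\frac1q)$ and $\theta-n(\frac1r-\frac1p)$) checks out.
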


In particular, when $0<r\leq2$, the conditions (2) and (3) are null for (1) implies $\frac{n}{p^{-1}n+\theta},\frac{n}{q^{-1}n+\phi}<2$. The original Cowling-Price uncertainty principle (see \cite{CP}) is the case $r=2$, $n=1$ where $p,q\in[1,\infty]$, $\theta+\frac1p=\phi+\frac1q$, and the higher dimensional case $r=2$, $n>1$ where $p,q\in(1,\infty)$ were obtained in Tang \cite{T24,T25}, so Corollary \ref{cor2} extends such results to the range $p,q\in(0,1]\cup\{\infty\}$. Corollary \ref{cor2} (with $\hat{f}$ replaced by $Af$) also extends Dias-Luef-Prata \cite[Theorem 18]{DLP} where \eqref{e3.2} was proved when $1\leq r<2$ and $p,q>r$. It is also easy to check that Corollary \ref{cor2} extends \cite[Theorems 14 and 15]{DLP}.

In the next special case, we let $a_1=a_2=p$, $b_1=b_2=\theta$ and $k_1=k_2=q$ in Theorem \ref{thm2} to obtain the following.

\begin{corollary}\label{cor3}
	Let $n\in\mathbb{N_+}$, $\theta\in(0,\infty)$, $p,q\in(0,\infty]$ satisfy $\frac{n}{p^{-1}n+\theta}<\min\{2,q\}$ and
	\begin{equation*}
		\mbox{$q<\left(\frac{n}{p^{-1}n+\theta}\right)'$},\quad\text{whenever }\mbox{$\frac{n}{p^{-1}n+\theta}\geq1$}.
	\end{equation*}
	Then
	\begin{equation}\label{e3.3}
		\frac{\left\||x|^\theta f\right\|_{L^p(\mathbb{R}^n)}}{\|f\|_{L^q(\mathbb{R}^n)}}\frac{\left\||\xi|^\theta\hat{f}\right\|_{L^p(\mathbb{R}^n)}}{\|\hat{f}\|_{L^q(\mathbb{R}^n)}}\gtrsim_{n,p,q,\theta}1,\quad0\neq f\in L^q\cap\mathscr{F}^{-1}L^q.
	\end{equation}
\end{corollary}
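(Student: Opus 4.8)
The plan is to obtain Corollary \ref{cor3} as a direct specialization of Theorem \ref{thm2}. First I would substitute $a_1=a_2=p$, $b_1=b_2=\theta$, $k_1=k_2=q$ and $x_0=\xi_0=0$ into Theorem \ref{thm2} and check that the hypotheses match. Writing $\sigma:=\frac{n}{p^{-1}n+\theta}$, the requirement $\frac{n}{a_i^{-1}n+b_i}<2$ becomes $\sigma<2$; and since here $\frac{n}{a_2^{-1}n+b_2}=\frac{n}{a_1^{-1}n+b_1}=\sigma$, the admissible window for both $k_1$ and $k_2$ in Theorem \ref{thm2} is $(\sigma,\infty]$ when $\sigma<1$ and $(\sigma,\sigma')$ when $\sigma\geq1$. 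The hypothesis $\frac{n}{p^{-1}n+\theta}<\min\{2,q\}$ of Corollary \ref{cor3} supplies both $\sigma<2$ and $\sigma<q$, while the condition $q<\sigma'$ when $\sigma\geq1$ closes the interval; hence the hypotheses of Theorem \ref{thm2} are in force, and the assumption $0\neq f\in L^q\cap\mathscr{F}^{-1}L^q$ is exactly $f\in L^{k_1}\cap\mathscr{F}^{-1}L^{k_2}$.

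Next I would note that under this substitution the two exponents appearing in Theorem \ref{thm2}, namely $\frac{1}{a_2}+\frac{b_2}{n}-\frac{1}{k_2}$ and $\frac{1}{a_1}+\frac{b_1}{n}-\frac{1}{k_1}$, both collapse to the single number
\[
\gamma:=\frac1p+\frac{\theta}{n}-\frac1q .
\]
Therefore Theorem \ref{thm2} gives, for $0\neq f\in L^q\cap\mathscr{F}^{-1}L^q$,
\[
\left(\frac{\left\||x|^\theta f\right\|_{L^p(\mathbb{R}^n)}}{\|f\|_{L^q(\mathbb{R}^n)}}\cdot\frac{\left\||\xi|^\theta\hat f\right\|_{L^p(\mathbb{R}^n)}}{\|\hat f\|_{L^q(\mathbb{R}^n)}}\right)^{\gamma}\gtrsim_{n,p,q,\theta}1 .
\]

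Finally I would observe that $\gamma>0$: indeed $\frac1p+\frac{\theta}{n}=\frac1\sigma$, so $\gamma=\frac1\sigma-\frac1q$, which is strictly positive precisely because $\sigma<q$. Raising the last display to the power $1/\gamma$ (the implicit constant still depending only on $n,p,q,\theta$) then yields \eqref{e3.3}. There is no genuine obstacle here, since everything reduces to Theorem \ref{thm2}; the only points deserving a line of care are the bookkeeping that translates the admissible $k$-ranges of Theorem \ref{thm2} into the stated hypotheses, and the verification that $\gamma>0$, both of which are consequences of $\frac{n}{p^{-1}n+\theta}<\min\{2,q\}$.
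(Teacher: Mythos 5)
Your proposal is correct and follows exactly the paper's route: Corollary \ref{cor3} is obtained by specializing Theorem \ref{thm2} with $a_1=a_2=p$, $b_1=b_2=\theta$, $k_1=k_2=q$, after which both exponents collapse to $\gamma=\frac1p+\frac\theta n-\frac1q>0$ and one raises to the power $1/\gamma$. Your bookkeeping of the admissible $k$-ranges and the positivity of $\gamma$ matches what the paper leaves implicit.
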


\eqref{e3.3} was obtained in Dias-Luef-Prata \cite[Theorem 17]{DLP} when $p\in[1,2)$ and $q\in[p,p']$, and Corollary \ref{cor3} is an extension of such result because clearly $\frac{n}{p^{-1}n+\theta}<p$, so that $p'<(\frac{n}{p^{-1}n+\theta})'$ whenever $\frac{n}{p^{-1}n+\theta}\geq1$.

If $\theta=1$ and $p=2$, then the condition in Corollary \ref{cor3} reads
\begin{equation*}
	q\in\begin{cases}
		(\frac23,\infty],\quad&n=1,\\
		(\frac{2n}{n+2},\frac{2n}{n-2}),&n\geq2,
	\end{cases}
\end{equation*}
which slightly extends Dias-Luef-Prata \cite[Theorem 20]{DLP} where $q\in[1,\infty]$ was assumed when $n=1$, and we remark that the conclusion is not true if $n\geq3$ and $q\in[1,\frac{2n}{n+2})\cup(\frac{2n}{n-2},\infty]$, see \cite[Proposition 22]{DLP}.

The last special case was initially considered in Tang \cite{T25} (or the earlier version \cite{T24}) as mentioned in \eqref{e1.9}. If $a_1=a_2=k_1=k_2=p$ and $b_1=b_2=1$, one checks that Theorem \ref{thm2} or Corollary \ref{cor3} requires $p\in(0,\infty]$, $\frac{n}{p^{-1}n+1}<\min\{2,p\}$, and
\begin{equation*}
	\mbox{$p<\left(\frac{n}{p^{-1}n+1}\right)'$},\quad\text{whenever }\mbox{$\frac{n}{p^{-1}n+1}\geq1$}.
\end{equation*}
But this is equivalent to say $p\in(0,\frac{2n}{n-1})$.

\begin{corollary}
	If $n\in\mathbb{N_+}$ and $p\in(0,\frac{2n}{n-1})$, then
	\begin{equation}\label{e3.4}
		\frac{\left\||x|f\right\|_{L^p(\mathbb{R}^n)}}{\|f\|_{L^p(\mathbb{R}^n)}}\frac{\left\||\xi|\hat{f}\right\|_{L^p(\mathbb{R}^n)}}{\|\hat{f}\|_{L^p(\mathbb{R}^n)}}\gtrsim_{n,p}1,\quad0\neq f\in L^p\cap\mathscr{F}^{-1}L^p.
	\end{equation}
\end{corollary}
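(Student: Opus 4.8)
The plan is to obtain \eqref{e3.4} as the special case $\theta=1$, $q=p$ of Corollary \ref{cor3} (equivalently, $a_1=a_2=k_1=k_2=p$ and $b_1=b_2=1$ in Theorem \ref{thm2}), so that the entire argument reduces to checking that, for this choice of parameters, the hypotheses of Corollary \ref{cor3} are equivalent to $p\in(0,\tfrac{2n}{n-1})$. With $\theta=1$ the controlling exponent appearing in Corollary \ref{cor3} simplifies to
\begin{equation*}
	\frac{n}{p^{-1}n+1}=\frac{np}{n+p}=:s_p
\end{equation*}
(with the convention $s_\infty=n$), and once $q=p$ is also imposed the conclusion \eqref{e3.3} is literally \eqref{e3.4} for $0\neq f\in L^p\cap\mathscr{F}^{-1}L^p$. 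So the only thing left to do is the elementary equivalence below.

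First I would simplify the condition $s_p<\min\{2,p\}$: since $\tfrac{n}{n+p}<1$ we always have $s_p<p$, so this is just $s_p<2$, i.e. $np<2(n+p)$, i.e. $p(n-2)<2n$ --- automatic for $n\le2$ and equal to $p<\tfrac{2n}{n-2}$ for $n\ge3$. Next I would treat the condition ``$p<s_p'$ whenever $s_p\ge1$'': the hypothesis $s_p\ge1$ means $np\ge n+p$, which for finite $p$ amounts to $p\ge\tfrac{n}{n-1}$ when $n\ge2$ and never holds when $n=1$; when $s_p>1$ one has $s_p'=\tfrac{s_p}{s_p-1}=\tfrac{np}{np-n-p}$, and $p<s_p'$ rearranges through $p(np-n-p)<np$ to $p^2(n-1)<2np$, i.e. $p<\tfrac{2n}{n-1}$, while in the borderline case $s_p=1$ one has $s_p'=\infty$ and the condition is automatic. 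Assembling the cases then finishes it: for $n=1$ the first condition is vacuous and the second is vacuous for finite $p$ but excludes $p=\infty$, leaving $(0,\infty)=(0,\tfrac{2n}{n-1})$; for $n=2$ the first condition imposes nothing on finite $p$ and the second gives $p<4=\tfrac{2n}{n-1}$; for $n\ge3$, because $\tfrac{n}{n-1}<\tfrac{2n}{n-1}<\tfrac{2n}{n-2}$, the bound $p<\tfrac{2n}{n-1}$ coming from the second condition already forces the first, and the two together cut out exactly $(0,\tfrac{2n}{n-1})$.

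I do not anticipate any real obstacle: everything past the appeal to Corollary \ref{cor3} is a short computation with rational functions of $p$. The only places that need a bit of care are keeping straight which of the two hypotheses is the binding one on each subinterval of $(0,\infty]$, and the endpoint/limiting value $p=\infty$ (equivalently the borderline $s_p=1$), which must be handled so that one lands on the half-open interval $(0,\tfrac{2n}{n-1})$ and not on something larger.
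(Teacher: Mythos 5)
Your proposal is correct and is essentially the paper's own proof: the paper obtains \eqref{e3.4} by specializing Theorem \ref{thm2} (equivalently Corollary \ref{cor3}) with $a_1=a_2=k_1=k_2=p$, $b_1=b_2=1$, and observing that the hypotheses $\frac{n}{p^{-1}n+1}<\min\{2,p\}$ together with $p<\bigl(\frac{n}{p^{-1}n+1}\bigr)'$ whenever $\frac{n}{p^{-1}n+1}\geq1$ are equivalent to $p\in(0,\frac{2n}{n-1})$. Your case-by-case computation (including the exclusion of $p=\infty$) just writes out explicitly the equivalence the paper leaves as ``one checks,'' and it is accurate.
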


When $p=2$, this is the Heisenberg uncertainty principle without optimal constant. Tang \cite[Theorem 2]{T25} obtained \eqref{e3.4} when $p\in(1,\frac{2n}{n-1})$, and it was also proved that \eqref{e3.4} is not true when $p\in(\frac{2n}{n-1},\infty)$. It was asked in \cite{T25} that whether \eqref{e3.4} still holds when $p=1,\frac{2n}{n-1},\infty$, and we partially extend such result to $p\in(0,1]$.

\subsection{Moment Growth for Linear Dispersive Equations}\

Heuristically for dispersive equations, since the dispersion and conservation law force the mass or energy to be ejected to the infinity, the relevant moments should grow in time. We give some examples to show how exactly the moments grow. The idea is that in Theorem \ref{thm1}, if $f_2$ is a solution of a dispersive equation at time $t$ with initial data $f_1$, then \eqref{e2.4} can be chosen essentially from inequalities that give the dispersion and conservation law of the equation.

We first consider the linear Schr\"{o}dinger equation
\begin{equation*}
	\begin{cases}
		i\partial_tu+\Delta u=0,\quad(t,x)\in\mathbb{R}^{1+n},\\
		u(0,x)=u_0.
	\end{cases}
\end{equation*}
Essentially from the dispersive estimate $\|e^{it\Delta}\|_{L^1(\mathbb{R}^n)\rightarrow L^\infty(\mathbb{R}^n)}\lesssim|t|^{-\frac n2}$ and $L^2$ isometry of $e^{it\Delta}$, we can obtain an uncertainty principle for the solution by considering $f_1(x)=u_0(x)$ and $f_2(x)=u(t,x)=e^{it\Delta}u_0(x)$ in Theorem \ref{thm1}.

\begin{theorem}\label{thm3}
	Let $n\in\mathbb{N_+}$, $b_i\in(0,\infty)$, $a_i,k_i\in(0,\infty]$ satisfy $\frac{n}{a_i^{-1}n+b_i}<\min\{2,k_i\}$ ($i=1,2$) and
	\begin{equation*}
		\begin{cases}
			k_1<\left(\frac{n}{a_2^{-1}n+b_2}\right)',\quad\text{whenever }\frac{n}{a_2^{-1}n+b_2}\geq1,\\
			k_2<\left(\frac{n}{a_1^{-1}n+b_1}\right)',\quad\text{whenever }\frac{n}{a_1^{-1}n+b_1}\geq1.
		\end{cases}
	\end{equation*}
	Then for $0\neq u_0\in\mathscr{S}(\mathbb{R}^n)$, we have
	\begin{equation}\label{e4.2}
		\begin{split}
			&\left(\frac{\left\||x-x_0|^{b_1}u_0\right\|_{L_x^{a_1}(\mathbb{R}^n)}}{\|u_0\|_{L^{k_1}(\mathbb{R}^n)}}\right)^{\frac{1}{a_2}+\frac{b_2}{n}-\frac{1}{k_2}}\left(\frac{\left\||x-x_1|^{b_2}e^{it\Delta}u_0\right\|_{L_x^{a_2}(\mathbb{R}^n)}}{\|e^{it\Delta}u_0\|_{L^{k_2}(\mathbb{R}^n)}}\right)^{\frac{1}{a_1}+\frac{b_1}{n}-\frac{1}{k_1}}\\
			\gtrsim&_{n,a_i,b_i,k_i}|t|^{n(\frac{1}{a_1}+\frac{b_1}{n}-\frac{1}{k_1})(\frac{1}{a_2}+\frac{b_2}{n}-\frac{1}{k_2})},\quad t\in\mathbb{R}\setminus\{0\},\,x_0,x_1\in\mathbb{R}^n.
		\end{split}
	\end{equation}
	In particular, taking $k_2=2$, we obtain the moment growth:
	\begin{equation}\label{e4.3}
		\left\||x-x_0|^be^{it\Delta}u_0\right\|_{L_x^a(\mathbb{R}^n)}\gtrsim_{u_0,n,a,b}|t|^{n(\frac1a+\frac bn-\frac12)},\quad t\in\mathbb{R}\setminus\{0\},\,x_0\in\mathbb{R}^n,
	\end{equation}
	provided $a\in(0,\infty]$, $b\in(0,\infty)$ and $\frac{n}{a^{-1}n+b}<2$.
\end{theorem}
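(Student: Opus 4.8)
The plan is to apply Theorem~\ref{thm1} with $n_1=n_2=n$, $f_1=u_0$ and $f_2=e^{it\Delta}u_0$ (relabelling the two free shift parameters in \eqref{e2.5} as $x_0$ and $x_1$). Both functions belong to $\mathscr{S}(\mathbb{R}^n)$ since $e^{it\Delta}$ preserves the Schwartz class, so all the Lebesgue and weighted Lebesgue norms below are finite, and $f_2\neq0$ because $e^{it\Delta}$ is injective. The two inequalities needed in \eqref{e2.4} are supplied by the dispersion of the Schr\"{o}dinger flow: interpolating $\|e^{\pm it\Delta}\|_{L^1(\mathbb{R}^n)\to L^\infty(\mathbb{R}^n)}\lesssim_n|t|^{-n/2}$ with the $L^2$-isometry $\|e^{\pm it\Delta}\|_{L^2(\mathbb{R}^n)\to L^2(\mathbb{R}^n)}=1$ gives, for every $r\in[1,2]$,
\begin{equation*}
	\|e^{\pm it\Delta}h\|_{L^{r'}(\mathbb{R}^n)}\lesssim_n|t|^{-n(\frac1r-\frac12)}\|h\|_{L^r(\mathbb{R}^n)}.
\end{equation*}
Taking the $+$ sign and $r=p_1$ yields the first inequality of \eqref{e2.4} with $m_2=p_1'$, $q_1=p_1$ and $C_1\sim_n|t|^{-n(\frac1{p_1}-\frac12)}$; taking the $-$ sign, $r=p_2$ and $h=e^{it\Delta}u_0$ yields $\|u_0\|_{L^{p_2'}}\lesssim_n|t|^{-n(\frac1{p_2}-\frac12)}\|e^{it\Delta}u_0\|_{L^{p_2}}$, which is the second inequality of \eqref{e2.4} with $q_2=p_2$, $m_1=p_2'$ and $C_2\sim_n|t|^{n(\frac1{p_2}-\frac12)}$. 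Here $p_1,p_2\in[1,2]$ remain to be chosen; this is exactly the mechanism behind Corollary~\ref{cor1} and the proof of Theorem~\ref{thm2}, with the Hausdorff--Young constant replaced by the (now $t$-dependent) dispersive constants, so one may also quote that corollary directly.

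In contrast to Theorem~\ref{thm2}, allowing $p_1\neq p_2$ is essential, and the main work is to verify that under the hypotheses on $a_i,b_i,k_i$ one can choose $p_1,p_2\in[1,2]$ so that the admissibility conditions \eqref{e2.3} of Theorem~\ref{thm1} hold with $(q_1,m_1)=(p_1,p_2')$ and $(q_2,m_2)=(p_2,p_1')$. Write $\alpha_i:=\frac{n}{a_i^{-1}n+b_i}$, so $\frac1{\alpha_i}=\frac1{a_i}+\frac{b_i}{n}$. In the variables $x:=\frac1{p_1}$, $y:=\frac1{p_2}$, the requirements $\alpha_i<q_i<m_i$ and $\alpha_i<k_i\le m_i$ unwind to: $x,y\in[\frac12,1]$; $x<\frac1{\alpha_1}$ and $y<\frac1{\alpha_2}$; $x\ge1-\frac1{k_2}$ and $y\ge1-\frac1{k_1}$; $x+y>1$; plus the $p$-free requirement $\alpha_i<k_i$, already part of $\alpha_i<\min\{2,k_i\}$. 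Now $\alpha_i<2$ gives $\frac1{\alpha_i}>\frac12$, while the cross conditions ``$k_1<\alpha_2'$ if $\alpha_2\ge1$'' and ``$k_2<\alpha_1'$ if $\alpha_1\ge1$'' are precisely what forces $1-\frac1{k_2}<\frac1{\alpha_1}$ and $1-\frac1{k_1}<\frac1{\alpha_2}$ (when the relevant $\alpha_i<1$ these hold trivially, and if some $k_i=\infty$ the companion cross condition forces the other $\alpha<1$, in which case one takes that $p$ equal to $1$). Since $\frac1{\alpha_i}>\frac12$ also gives $\min\{1,\frac1{\alpha_1}\}+\min\{1,\frac1{\alpha_2}\}>1$, one can pick $x$ just below $\min\{1,\frac1{\alpha_1}\}$ and $y$ just below $\min\{1,\frac1{\alpha_2}\}$ so that $x+y>1$ while meeting the two lower bounds. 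I expect this bookkeeping --- entirely parallel to the verification already carried out for Theorem~\ref{thm2} --- to be the single delicate point; everything else is mechanical.

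With $p_1,p_2$ fixed, $\frac1{q_1}-\frac1{m_1}=\frac1{p_1}+\frac1{p_2}-1=\frac1{q_2}-\frac1{m_2}=:\beta>0$ and $\frac{C_2}{C_1}\sim_n|t|^{n\beta}$. Putting $\mu:=\frac1{a_1}+\frac{b_1}{n}-\frac1{k_1}>0$ and $\nu:=\frac1{a_2}+\frac{b_2}{n}-\frac1{k_2}>0$ (positive since $\alpha_i<k_i$), inequality \eqref{e2.5} becomes
\begin{equation*}
	\left(\frac{\left\||x-x_0|^{b_1}u_0\right\|_{L^{a_1}}}{\|u_0\|_{L^{k_1}}}\right)^{\beta\nu}\left(\frac{\left\||x-x_1|^{b_2}e^{it\Delta}u_0\right\|_{L^{a_2}}}{\|e^{it\Delta}u_0\|_{L^{k_2}}}\right)^{\beta\mu}\gtrsim_{n,a_i,b_i,k_i}|t|^{n\beta\mu\nu},
\end{equation*}
the implicit constant absorbing a fixed power of the dispersive constant and of Theorem~\ref{thm1}'s own constant (all depending only on $n,a_i,b_i,k_i$, via the choice of $p_1,p_2$). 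Raising both sides to the power $1/\beta$ clears $\beta$ and produces exactly \eqref{e4.2}, whose $|t|$-exponent is $n\mu\nu=n(\frac1{a_1}+\frac{b_1}{n}-\frac1{k_1})(\frac1{a_2}+\frac{b_2}{n}-\frac1{k_2})$.

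Finally, \eqref{e4.3} follows by specializing \eqref{e4.2} to $k_2=2$, $a_2=a$, $b_2=b$ together with any triple $(a_1,b_1,k_1)$ admissible in \eqref{e4.2} --- for instance $(a,b,2)$, which is admissible exactly because $\frac{n}{a^{-1}n+b}<2$ (this also makes the cross conditions hold automatically). With such a choice the factor built from $u_0$ and the $f_1$-side norms is a fixed positive constant depending on $u_0$, and $\|e^{it\Delta}u_0\|_{L^2}=\|u_0\|_{L^2}$ by conservation of mass; moving these to the right-hand side and raising the resulting inequality to the power $(\frac1{a_1}+\frac{b_1}{n}-\frac1{k_1})^{-1}>0$ isolates $\left\||x-x_0|^be^{it\Delta}u_0\right\|_{L^a}$ and gives the asserted bound $\gtrsim_{u_0,n,a,b}|t|^{n(\frac1a+\frac bn-\frac12)}$.
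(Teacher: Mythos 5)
Your proposal is correct and follows essentially the same route as the paper: interpolate the dispersive estimate with the $L^2$ isometry to get the two-sided bounds with $A=B=|t|^{-n/2}$, feed $f_1=u_0$, $f_2=e^{it\Delta}u_0$ into Theorem \ref{thm1} (equivalently Corollary \ref{cor1} with Remark \ref{rk1}), and observe that the constant ratio produces $|t|^{n}$ raised to the product of exponents. Your explicit verification that admissible $p_1,p_2$ exist, and the derivation of \eqref{e4.3} via $k_2=2$ and mass conservation, are exactly the bookkeeping the paper delegates to Corollary \ref{cor1}/Remark \ref{rk1}, and they check out.
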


\begin{remark}
	The result of course holds if we replace $\Delta$ by a self-adjoint operator $H$ in $L^2(\mathbb{R}^n)$ sharing the same dispersive estimate $\|e^{itH}\|_{L^1(\mathbb{R}^n)\rightarrow L^\infty(\mathbb{R}^n)}\lesssim|t|^{-\frac n2}$. For example, $H=\Delta+V(x)$ where $V$ is a real-valued potential with sufficient decay and smoothness, and $H$ has no zero energy resonance, see e.g. \cite{JSS,S,Y,EG,CHHZ1,CHHZ2} and references therein. If $e^{itH}$ has a different dispersive estimate $\|e^{itH}\|_{L^1(\mathbb{R}^n)\rightarrow L^\infty(\mathbb{R}^n)}\lesssim C(t)$, the same argument in the proof in Section \ref{sec3} can also give the theorem with different bounds in \eqref{e4.2} and \eqref{e4.3}.
\end{remark}

\begin{remark}
	If $b>0$ is an integer, the vector field method can also give
	\begin{equation}\label{bd}
		\left\||x|^be^{it\Delta}u_0\right\|_{L^2(\mathbb{R}^n)}\sim_{n,u_0}|t|^b,\quad|t|\gg1.
	\end{equation}
	see Cazenave \cite[Section 2.5]{C} for $b=1$, and the case of larger integer $b$ can be proved by induction. Our result \eqref{e4.3} coincides with \eqref{bd} in such case.
\end{remark}

The proof of the above theorem for Schr\"{o}dinger equation (see Section \ref{sec3}) can also be adapted to the Airy equation and the Kadomtsev-Petviashnili equations without any difficulty, because the solutions are given by relevant unitary groups in $L^2$, and the dispersive estimates can be found in Koch-Tataru-Vi\c{s}an \cite[Chapter 3 in Part 1]{KTV}.

We next turn to another example of the wave equation
\begin{equation}\label{e4.4}
	\begin{cases}
		\partial_t^2u=\Delta u,\quad(t,x)\in\mathbb{R}^{1+n},\\
		(u(0),\partial_tu(0))=(u_0,u_1)\in\dot{H}\times L^2,
	\end{cases}
\end{equation}
whose conserved energy is
\begin{equation*}
	E(u)(t)=\|\sqrt{-\Delta}u(t)\|_{L_x^2(\mathbb{R}^n)}^2+\|\partial_tu(t)\|^2_{L_x^2(\mathbb{R}^n)}.
\end{equation*}
Since the dispersive estimate for \eqref{e4.4} (see Section \ref{sec3}) depends on the Fourier frequencies, we have to introduce the Littlewood-Paley projection $P_N$ of frequencies $|\xi|\sim N\in2^{\mathbb{Z}}$. We can obtain the uncertainty principle for the projected energy of \eqref{e4.4} by considering $f_1=|\sqrt{-\Delta}P_Nu_0|+|P_Nu_1|$ and $f_2=|\sqrt{-\Delta}P_Nu|+|\partial_tP_Nu|$ in Theorem \ref{thm1}, but note that the dependence between $f_1$ and $f_2$ is not linear.

\begin{theorem}\label{thm4}
	Let $n\in\mathbb{N_+}$, $b_i\in(0,\infty)$, $a_i,k_i\in(0,\infty]$ satisfy $\frac{n}{a_i^{-1}n+b_i}<\min\{2,k_i\}$ ($i=1,2$) and
	\begin{equation*}
		\begin{cases}
			k_1<\left(\frac{n}{a_2^{-1}n+b_2}\right)',\quad\text{whenever }\frac{n}{a_2^{-1}n+b_2}\geq1,\\
			k_2<\left(\frac{n}{a_1^{-1}n+b_1}\right)',\quad\text{whenever }\frac{n}{a_1^{-1}n+b_1}\geq1.
		\end{cases}
	\end{equation*}
	If $(u,\partial_tu)\in C(\mathbb{R}:\dot{H}\times L^2)$ solves \eqref{e4.4}, then
	\begin{equation}\footnotesize\label{e1.22}
		\begin{split}
			&\left(\frac{\left\||x-x_0|^{b_1}\left(|\sqrt{-\Delta}P_Nu_0|+|P_Nu_1|\right)\right\|_{L_x^{a_1}(\mathbb{R}^n)}}{\left\||\sqrt{-\Delta}P_Nu_0|+|P_Nu_1|\right\|_{L^{k_1}(\mathbb{R}^n)}}\right)^{\frac{1}{a_2}+\frac{b_2}{n}-\frac{1}{k_2}}\left(\frac{\left\||x-x_1|^{b_2}\left(|\sqrt{-\Delta}P_Nu|+|\partial_tP_Nu|\right)\right\|_{L_x^{a_2}(\mathbb{R}^n)}}{\left\||\sqrt{-\Delta}P_Nu|+|\partial_tP_Nu|\right\|_{L^{k_2}(\mathbb{R}^n)}}\right)^{\frac{1}{a_1}+\frac{b_1}{n}-\frac{1}{k_1}}\\
			\gtrsim&_{n,a_i,b_i,k_i}\left((1+|t|N)^{n-1}N^{-2n}\right)^{(\frac{1}{a_1}+\frac{b_1}{n}-\frac{1}{k_1})(\frac{1}{a_2}+\frac{b_2}{n}-\frac{1}{k_2})},\quad t\in\mathbb{R}\setminus\{0\},\,N\in2^{\mathbb{Z}},\,x_0,x_1\in\mathbb{R}^n,
		\end{split}
	\end{equation}
	provided $\sqrt{-\Delta}P_Nu_0,P_Nu_1\in L^{k_1}(\mathbb{R}^n)$ and $\sqrt{-\Delta}P_Nu,\partial_tP_Nu\in L^{k_2}(\mathbb{R}^n)$. In particular, taking $k_1=k_2=2$, the conservation of energy implies the moment growth for the projected energy:
	\begin{equation*}
		\begin{split}
			&\left\||x-x_0|^b\left(|\sqrt{-\Delta}P_Nu|+|\partial_tP_Nu|\right)\right\|_{L_x^a(\mathbb{R}^n)}\\
			\gtrsim&_{n,a,b,u_0,u_1}\left((1+|t|N)^{n-1}N^{-2n}\right)^{\frac1a+\frac bn-\frac{1}{2}},\quad t\in\mathbb{R}\setminus\{0\},\,N\in2^{\mathbb{Z}},\,x_0\in\mathbb{R}^n
		\end{split}
	\end{equation*}
	provided $a\in(0,\infty]$, $b\in(0,\infty)$, $\frac{n}{a^{-1}n+b}<2$ and that $|\sqrt{-\Delta}P_Nu_0|+|P_Nu_1|$ has sufficient decay.
\end{theorem}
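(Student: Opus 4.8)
\medskip
\noindent\textbf{Proof strategy for Theorem \ref{thm4}.}
The plan is to feed $f_1=|\sqrt{-\Delta}P_Nu_0|+|P_Nu_1|$ and $f_2=|\sqrt{-\Delta}P_Nu(t)|+|\partial_tP_Nu(t)|$ (with $n_1=n_2=n$) into Theorem \ref{thm1}, producing \eqref{e2.4} from the frequency-localized dispersive estimate and the conservation of energy. From $u(t)=\cos(t\sqrt{-\Delta})u_0+\frac{\sin(t\sqrt{-\Delta})}{\sqrt{-\Delta}}u_1$, applying $P_N$ and $\sqrt{-\Delta}$ shows that $(\sqrt{-\Delta}P_Nu(t),\partial_tP_Nu(t))$ is the image of $(\sqrt{-\Delta}P_Nu_0,P_Nu_1)$ under the rotation $(g_1,g_2)\mapsto(\cos(t\sqrt{-\Delta})g_1+\sin(t\sqrt{-\Delta})g_2,\ -\sin(t\sqrt{-\Delta})g_1+\cos(t\sqrt{-\Delta})g_2)$, whose inverse is the same rotation with $-t$; this makes $f_2$ depend nonlinearly on $f_1$, but the dependence is still governed by the two propagators $\cos(t\sqrt{-\Delta})P_N$ and $\sin(t\sqrt{-\Delta})P_N$. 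For these I would use the dispersive bound $\|e^{\pm it\sqrt{-\Delta}}P_Ng\|_{L^\infty(\mathbb{R}^n)}\lesssim N^n(1+|t|N)^{-\frac{n-1}{2}}\|g\|_{L^1(\mathbb{R}^n)}$ (hence the same for $\cos(t\sqrt{-\Delta})P_N$, $\sin(t\sqrt{-\Delta})P_N$), together with $\|\sqrt{-\Delta}P_Nu(t)\|_{L^2}^2+\|\partial_tP_Nu(t)\|_{L^2}^2=\|\sqrt{-\Delta}P_Nu_0\|_{L^2}^2+\|P_Nu_1\|_{L^2}^2$, which is Plancherel plus the fact that the above rotation preserves the Euclidean length of $(|\xi|\widehat{P_Nu},\partial_t\widehat{P_Nu})$ pointwise in $\xi$.

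Interpolating the dispersive bound against the trivial $L^2\to L^2$ bound of $\cos(t\sqrt{-\Delta})P_N$ and $\sin(t\sqrt{-\Delta})P_N$ gives $\|\cos(t\sqrt{-\Delta})P_Ng\|_{L^{p'}}+\|\sin(t\sqrt{-\Delta})P_Ng\|_{L^{p'}}\lesssim M^{1-2/p'}\|g\|_{L^p}$ for $p\in[1,2]$, where $M:=N^n(1+|t|N)^{-\frac{n-1}{2}}$, so $M^{-2}=(1+|t|N)^{n-1}N^{-2n}$. Combined with the analogous estimate for the inverse rotation, the triangle inequality, and the pointwise bounds $|\sqrt{-\Delta}P_Nu_0|,|P_Nu_1|\le f_1$ and $|\sqrt{-\Delta}P_Nu(t)|,|\partial_tP_Nu(t)|\le f_2$, this yields for any $p,\tilde p\in[1,2]$
\[
\|f_2\|_{L^{p'}(\mathbb{R}^n)}\lesssim M^{1-\frac{2}{p'}}\|f_1\|_{L^{p}(\mathbb{R}^n)},\qquad
\|f_1\|_{L^{\tilde p'}(\mathbb{R}^n)}\lesssim M^{1-\frac{2}{\tilde p'}}\|f_2\|_{L^{\tilde p}(\mathbb{R}^n)},
\]
i.e.\ \eqref{e2.4} with $(q_1,m_1,q_2,m_2)=(p,\tilde p',\tilde p,p')$, $C_1\sim M^{1-2/p'}$, $C_2\sim M^{-(1-2/\tilde p')}$; one checks $\tfrac1{q_1}-\tfrac1{m_1}=\tfrac1{q_2}-\tfrac1{m_2}=\tfrac1p+\tfrac1{\tilde p}-1=:\gamma$ and $C_2/C_1\sim M^{-2\gamma}$.

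It remains to pick $p,\tilde p\in[1,2]$ meeting \eqref{e2.3}, which here reads $p>\tfrac{n}{a_1^{-1}n+b_1}$, $\tilde p>\tfrac{n}{a_2^{-1}n+b_2}$, $k_1\le\tilde p'$, $k_2\le p'$ (the conditions $q_i<m_i$, equivalent to $\gamma>0$, being automatic since we take $p<2$). As $\tfrac{n}{a_j^{-1}n+b_j}<2$ and, by assumption, $k_1<(\tfrac{n}{a_2^{-1}n+b_2})'$ whenever $\tfrac{n}{a_2^{-1}n+b_2}\ge1$ and $k_2<(\tfrac{n}{a_1^{-1}n+b_1})'$ whenever $\tfrac{n}{a_1^{-1}n+b_1}\ge1$, admissible $p,\tilde p$ (with $p<2$) exist — taking conjugates in the cases $k_i>2$, the constraint on $k_i$ being vacuous when $k_i\le2$ — and this is exactly where the hypotheses enter. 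As for the requirement $0\ne f_i\in L^{q_i}\cap L^{k_i}\cap L^{m_i}$ in Theorem \ref{thm1}: $f_i\in L^{k_i}$ is assumed, and $f_i\ne0$ unless $P_Nu_0$ and $P_Nu_1$ both vanish (in which case \eqref{e1.22} is vacuous); if one of the weighted norms appearing in \eqref{e1.22} is infinite, \eqref{e1.22} holds trivially because the exponents $\beta_i:=\tfrac1{a_i}+\tfrac{b_i}{n}-\tfrac1{k_i}$ are $>0$ by \eqref{e2.3}; and if both are finite, then splitting into dyadic annuli and using H\"older (interpolating the finite weighted $L^{a_i}$ mass against $\|f_i\|_{L^{k_i}}$, and Bernstein's inequality on the frequency-localized constituents for the range $r\ge k_i$) shows $f_i\in L^r$ for every $r>\tfrac{n}{a_i^{-1}n+b_i}$, in particular $f_i\in L^{q_i}\cap L^{m_i}$. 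Theorem \ref{thm1} then yields $T_1^{\gamma\beta_2}T_2^{\gamma\beta_1}\gtrsim(C_2/C_1)^{\beta_1\beta_2}\sim M^{-2\gamma\beta_1\beta_2}$, where $T_1,T_2$ are the two bracketed ratios of \eqref{e1.22}; raising to the power $1/\gamma>0$ gives \eqref{e1.22}.

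Finally, for the moment growth take $k_1=k_2=2$, which is admissible since $\tfrac{n}{a_i^{-1}n+b_i}<2<(\tfrac{n}{a_j^{-1}n+b_j})'$. Conservation of energy makes both denominators of \eqref{e1.22} comparable, with dimensional constants only, to the fixed positive number $(\|\sqrt{-\Delta}P_Nu_0\|_{L^2}^2+\|P_Nu_1\|_{L^2}^2)^{1/2}$; the stated decay hypothesis makes the $f_1$-numerator a finite positive constant; and solving \eqref{e1.22} for $\||x-x_0|^b(|\sqrt{-\Delta}P_Nu|+|\partial_tP_Nu|)\|_{L^a}$ (with $a=a_2$, $b=b_2$, and for instance $a_1=a$, $b_1=b$) and raising to $1/\beta_1$ gives the asserted lower bound. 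I expect the main nuisance to lie in the parameter bookkeeping of the third paragraph — exhibiting admissible $p,\tilde p$ and verifying the $L^r$-memberships when $k_i>2$ — the dispersive and energy inputs being standard.
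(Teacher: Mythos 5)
Your proposal is correct and follows essentially the same route as the paper: the same choice $f_1=|\sqrt{-\Delta}P_Nu_0|+|P_Nu_1|$, $f_2=|\sqrt{-\Delta}P_Nu|+|\partial_tP_Nu|$, the frequency-localized half-wave dispersive estimate (interpolated with the trivial $L^2$ bound) applied to the solution formula and its inverse to produce the two-sided estimates \eqref{e2.4} with $C_2/C_1\sim\left((1+|t|N)^{n-1}N^{-2n}\right)^{\gamma}$, and then Theorem \ref{thm1} with the same parameter bookkeeping that the paper packages as Corollary \ref{cor1} and Remark \ref{rk1}. Your additional handling of the $L^{q_i}\cap L^{m_i}$ memberships and of the degenerate/infinite-norm cases only spells out details the paper leaves implicit.
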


A similar result can also be established for the Klein-Gordon equation
\begin{equation*}
	\begin{cases}
		\partial_t^2u-\Delta u+u=0,\quad(t,x)\in\mathbb{R}^{1+n},\\
		(u(0),\partial_tu(0))\in H^1\times L^2,
	\end{cases}
\end{equation*}
because we have the energy conservation law
\begin{equation*}
	\|\sqrt{I-\Delta}u\|_{L_x^2(\mathbb{R}^n)}^2+\|\partial_tu\|_{L_x^2(\mathbb{R}^n)}^2=\|\sqrt{I-\Delta}u_0\|_{L_x^2(\mathbb{R}^n)}^2+\|u_1\|_{L_x^2(\mathbb{R}^n)}^2,
\end{equation*}
and the dispersive estimates at different frequencies were proved in Guo-Peng-Wang \cite{GPW}. The argument is completely parallel to those for the wave equation, so we do not present the statement here.

\subsection{Spacetime Moment for the Linear Schr\"{o}dinger and Heat Equations}\

We can also give an application of Theorem \ref{thm1} where $n_1\neq n_2$, by considering the Schr\"{o}dinger equation
\begin{equation}\label{e5.1}
	\begin{cases}
		i\partial_tu+\Delta u=0,\quad t\in[0,T],\,x\in\mathbb{R}^n,\\
		u(0,x)=u_0(x).
	\end{cases}
\end{equation}

The proof of the following theorem (see Section \ref{sec4}) will not only use the dispersive estimate and Strichartz estimate to give an upper bound of $u$, but also use the control theory to give a lower bound of the spacetime norm of $u$. Recall that for Schr\"{o}dinger equation \eqref{e5.1}, the \textit{observability inequality} at time $T$:
\begin{equation}\label{e1.24}
	\|e^{it\Delta}u_0\|_{L_{t,x}^2([0,T]\times\Omega)}\gtrsim_{T,\Omega}\|u_0\|_{L^2(\mathbb{R}^n)},
\end{equation}
is valid if $\Omega\subset\mathbb{R}^n$ is an observable set for \eqref{e5.1} at time $T$. For example, $\Omega$ is the complement of a ball in $\mathbb{R}^n$ (see Wang-Wang-Zhang \cite{WWZ}); $\Omega$ is nonempty and open, satisfying the \textit{Geometric Control Condition} or being periodic (See T\"{a}ufer \cite{T}).

Considering $f_1(x)=u_0(x)$ and $f_2(t,x)=\mathbbm{1}_{[0,T]}(t)\mathbbm{1}_{\Omega}(x)u(t,x)$ in Theorem \ref{thm1} gives the following result.

\begin{theorem}\label{thm5}
	Let $n\in\mathbb{N_+}$, $a_i\in(0,\infty]$, $b_i,k_i\in(0,\infty)$ ($i=1,2$) with
	\begin{equation}\label{e1.25}
		\mbox{$\frac{n}{a_1^{-1}n+b_1}$}<k_1\leq2,\quad\mbox{$\frac{n+1}{a_2^{-1}(n+1)+b_2}$}<\min\{2,k_2\},\quad k_2<\mbox{$\frac{2n+4}{n}$}.
	\end{equation}
	If $u=e^{it\Delta}u_0$ with $0\neq u_0\in L^{k_1}(\mathbb{R}^n)$, $\Omega\subset\mathbb{R}^n$ is an observable set, and $u\in L_{t,x}^{k_2}([0,T]\times\Omega)$, then
	\begin{equation}\label{e1.26}
		\begin{split}
			&\left(\frac{\left\||x-x_0|^{b_1}u_0\right\|_{L_x^{a_1}(\mathbb{R}^n)}}{\|u_0\|_{L^{k_1}(\mathbb{R}^n)}}\right)^{\frac{1}{a_2}+\frac{b_2}{n+1}-\frac{1}{k_2}}\left(\frac{\left\||(t-t_0,x-x_1)|^{b_2}u\right\|_{L_{t,x}^{a_2}([0,T]\times\Omega)}}{\|u\|_{L_{t,x}^{k_2}([0,T]\times\Omega)}}\right)^{\frac{1}{a_1}+\frac{b_1}{n}-\frac{1}{k_1}}\\
			\gtrsim&_{T,\Omega,n,a_i,b_i,k_i}1,\quad x_0,x_1\in\mathbb{R}^n,\,t_0\in\mathbb{R}.
		\end{split}
	\end{equation}
\end{theorem}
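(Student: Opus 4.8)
The plan is to apply Theorem \ref{thm1} with $n_1=n$, $n_2=n+1$, $f_1(x)=u_0(x)$ and $f_2(t,x)=\mathbbm{1}_{[0,T]}(t)\mathbbm{1}_{\Omega}(x)u(t,x)$. With these choices the two hypotheses in \eqref{e2.4} become, respectively, an upper bound on a spacetime norm of the solution in terms of a norm of the data and a matching lower bound. Once both are available with exponents obeying \eqref{e2.3}, Theorem \ref{thm1} produces \eqref{e2.5}, and raising that inequality to a suitable power yields \eqref{e1.26}. All of the analysis therefore goes into establishing the two inequalities of \eqref{e2.4}.

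For the lower bound, which is the second inequality of \eqref{e2.4}, I would invoke the observability inequality \eqref{e1.24}: since $\Omega$ is an observable set for \eqref{e5.1} at time $T$,
\[
\|f_2\|_{L^2_{t,x}(\mathbb{R}^{n+1})}=\|e^{it\Delta}u_0\|_{L^2_{t,x}([0,T]\times\Omega)}\gtrsim_{T,\Omega}\|u_0\|_{L^2(\mathbb{R}^n)}.
\]
Thus one takes $q_2=m_1=2$ and $C_2\sim_{T,\Omega}1$; the hypothesis $k_1\le 2$ in \eqref{e1.25} is precisely $k_1\le m_1$, and $\frac{n}{a_1^{-1}n+b_1}<k_1$ is the corresponding threshold in \eqref{e2.3}.

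For the upper bound, the first inequality of \eqref{e2.4}, I would estimate $\|f_2\|_{L^{m_2}(\mathbb{R}^{n+1})}\le\|e^{it\Delta}u_0\|_{L^{m_2}_{t,x}([0,T]\times\mathbb{R}^n)}$ and prove a Strichartz-type bound $\|e^{it\Delta}u_0\|_{L^{m_2}_{t,x}([0,T]\times\mathbb{R}^n)}\lesssim_{T}\|u_0\|_{L^{q_1}(\mathbb{R}^n)}$ for an appropriate pair with $q_1<2<m_2$: starting from the dispersive estimate $\|e^{it\Delta}\|_{L^{q_1}(\mathbb{R}^n)\to L^{q_1'}(\mathbb{R}^n)}\lesssim|t|^{-n(\frac1{q_1}-\frac12)}$ ($1\le q_1\le 2$), interpolating with the $L^2$ conservation law, and using H\"{o}lder's inequality on the finite interval $[0,T]$. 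The exponent $q_1$ must be kept strictly below $m_1=2$ (as \eqref{e2.3} demands), and $q_1,m_2$ are arranged so that $\frac1{q_1}-\frac1{m_1}=\frac1{q_2}-\frac1{m_2}$; the diagonal Strichartz exponent $\frac{2(n+2)}{n}=\frac{2n+4}{n}$ on $\mathbb{R}^{1+n}$ is the borderline value for $m_2$, which is why \eqref{e1.25} asks for $\frac{n+1}{a_2^{-1}(n+1)+b_2}<\min\{2,k_2\}$ together with $k_2<\frac{2n+4}{n}$ (so that $\frac{n_2}{a_2^{-1}n_2+b_2}<q_2,k_2$ and $k_2\le m_2$). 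I expect this step to be the main obstacle: the delicate point is the integrability of the dispersive kernel $|t|^{-n(\frac1{q_1}-\frac12)}$ as $t\to0$ over $[0,T]$, which is exactly what couples $q_1$ to $m_2$ and must be reconciled with all the constraints in \eqref{e1.25}.

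Finally, with \eqref{e2.4} in hand for the chosen exponents and \eqref{e2.3} verified for $(n_1,n_2)=(n,n+1)$, Theorem \ref{thm1} gives \eqref{e2.5}. Since the parameters were picked so that the two factors $\frac1{q_i}-\frac1{m_i}$ equal a common value $\delta>0$, one raises \eqref{e2.5} to the power $1/\delta$: the exponents on the two bracketed ratios become exactly $\frac1{a_2}+\frac{b_2}{n+1}-\frac1{k_2}$ and $\frac1{a_1}+\frac{b_1}{n}-\frac1{k_1}$ (both positive, by the first inequalities in \eqref{e2.3}, which follow from \eqref{e1.25}), while the right-hand side becomes a positive power of $C_2/C_1\gtrsim_{T,\Omega}1$, hence $\gtrsim_{T,\Omega}1$. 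This is \eqref{e1.26}.
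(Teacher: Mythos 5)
Your skeleton coincides with the paper's: apply Theorem \ref{thm1} with $n_1=n$, $n_2=n+1$, $f_1=u_0$, $f_2=\mathbbm{1}_{[0,T]}\mathbbm{1}_\Omega e^{it\Delta}u_0$, take the lower bound in \eqref{e2.4} from the observability inequality with $q_2=m_1=2$, and take the upper bound from spacetime integrability of the free evolution. The gap is exactly the step you flag as ``the main obstacle,'' and it is not a technicality that resolves itself: your requirement $\frac1{q_1}-\frac1{m_1}=\frac1{q_2}-\frac1{m_2}$ with $m_1=q_2=2$ forces $m_2=q_1'$, and the only mechanism you propose for the upper bound $\|e^{it\Delta}u_0\|_{L^{m_2}_{t,x}([0,T]\times\mathbb{R}^n)}\lesssim_T\|u_0\|_{L^{q_1}}$ is the fixed-time decay $|t|^{-n(\frac1{q_1}-\frac12)}$ integrated over $[0,T]$, which requires $n(\frac1{q_1}-\frac12)\,q_1'<1$, i.e.\ $q_1>\frac{2n+2}{n+2}$, hence $m_2=q_1'<\frac{2n+2}{n}$. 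Since Theorem \ref{thm1} needs $k_2\le m_2$, this covers only $k_2<\frac{2n+2}{n}$, and it additionally demands $q_1\le k_2'$ simultaneously with $q_1>\frac{n}{a_1^{-1}n+b_1}$, a coupling between $k_2$ and $(a_1,b_1)$ that \eqref{e1.25} does not supply. In particular the claimed range $k_2<\frac{2n+4}{n}$ admits $k_2\in[\frac{2n+2}{n},\frac{2n+4}{n})$, which is unreachable by duality pairs built from the dispersive estimate alone, so the ``reconciliation with \eqref{e1.25}'' you defer cannot be carried out inside your framework; as written, your argument proves the theorem only under the extra restrictions $k_2<\frac{2n+2}{n}$ and, when $\frac{n}{a_1^{-1}n+b_1}\ge1$, $k_2<\bigl(\frac{n}{a_1^{-1}n+b_1}\bigr)'$.

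The ingredient you are missing, and which the paper uses precisely to reach $\frac{2n+4}{n}$, is the diagonal Strichartz estimate $\|e^{it\Delta}u_0\|_{L^{\frac{2n+4}{n}}_{t,x}([0,T]\times\mathbb{R}^n)}\lesssim\|u_0\|_{L^2}$. Interpolating it with the bounds $\|e^{it\Delta}u_0\|_{L^{p'}_{t,x}([0,T]\times\mathbb{R}^n)}\lesssim_{T,p}\|u_0\|_{L^p}$ for $\frac{2n+2}{n+2}<p\le2$ enlarges the admissible pairs $(\frac1p,\frac1q)$ from the duality segment to the triangle $\Delta_n$ with vertex $(\frac12,\frac{n}{2n+4})$, which is what allows one to choose $m_2=q\ge k_2$ for any $k_2<\frac{2n+4}{n}$ while keeping $q_1=p<2$ and $p>\frac{n}{a_1^{-1}n+b_1}$; this is the content of \eqref{e5.2} and the verification \eqref{eee4.2} in the paper. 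A final caution: your observation that the passage from \eqref{e2.5} to the stated exponents in \eqref{e1.26} needs $\frac1{q_1}-\frac1{m_1}=\frac1{q_2}-\frac1{m_2}$ is correct, and off the duality line this bookkeeping requires extra care (the paper glosses over it); but with only the dispersive estimate the pairs needed for the full range of $k_2$ do not even exist, so the Strichartz input is the decisive missing idea.
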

	
A parallel result can also be established for the heat equation
\begin{equation*}
	\begin{cases}
		\partial_tu-\Delta u=0,\quad t\in(0,T],\,x\in\mathbb{R}^n,\\
		u(0,x)=u_0(x).
	\end{cases}
\end{equation*}
Recall in the control theory that the \textit{observability inequality} at time $T$
\begin{equation}\label{e1.28}
	\|e^{t\Delta}u_0\|_{L_{t,x}^2([0,T]\times\Omega)}\gtrsim_{T,\Omega}\|e^{T\Delta}u_0\|_{L^2(\mathbb{R}^n)},
\end{equation}
holds if and only if $\Omega\subset\mathbb{R}^n$ is \textit{thick} (see Wang-Wang-Zhang-Zhang \cite{WWZZ}), i.e. $\Omega$ is measurable, and there exists $\gamma,L>0$ with
\begin{equation*}
	\left|\Omega\cap(x+LQ)\right|\geq\gamma L^n,
\end{equation*}
for every $x\in\mathbb{R}^n$, where $Q$ is the open unit cube in $\mathbb{R}^n$ centered at the origin. If we further know that $|\xi|\leq R$ holds in $\mathrm{supp}\,\hat{u}_0$, then it follows from \eqref{e1.28} that
\begin{equation}\label{e127}
		\|e^{t\Delta}u_0\|_{L_{t,x}^2([0,T]\times\Omega)}\gtrsim_{T,\Omega}e^{-TR^2}\|u_0\|_{L^2(\mathbb{R}^n)}.
\end{equation}
Combining this with the use of the heat kernel, we can prove an analogue of Theorem \ref{thm5} for the heat equation.

\begin{theorem}\label{thm5'}
	Let $n\in\mathbb{N_+}$, $a_i\in(0,\infty]$, $b_i,k_i\in(0,\infty)$ ($i=1,2$), and assume \eqref{e1.25}. If $u=e^{t\Delta}u_0$ with $0\neq u_0\in L^{k_1}(\mathbb{R}^n)$ and $|\xi|<R$ holds in $\mathrm{supp}\,\hat{u}_0$ for some $R>0$, and if $u\in L_{t,x}^{k_2}([0,T]\times\Omega)$ for some thick $\Omega\subset\mathbb{R}^n$, then
\begin{equation}\label{e128}
	\begin{split}
		&\left(\frac{\left\||x-x_0|^{b_1}u_0\right\|_{L_x^{a_1}(\mathbb{R}^n)}}{\|u_0\|_{L^{k_1}(\mathbb{R}^n)}}\right)^{\frac{1}{a_2}+\frac{b_2}{n+1}-\frac{1}{k_2}}\left(\frac{\left\||(t-t_0,x-x_1)|^{b_2}u\right\|_{L_{t,x}^{a_2}([0,T]\times\Omega)}}{\|u\|_{L_{t,x}^{k_2}([0,T]\times\Omega)}}\right)^{\frac{1}{a_1}+\frac{b_1}{n}-\frac{1}{k_1}}\\
		\gtrsim&_{T,\Omega,n,a_i,b_i,k_i}e^{-TR^2(\frac{1}{a_1}+\frac{b_1}{n}-\frac{1}{k_1})(\frac{1}{a_2}+\frac{b_2}{n+1}-\frac{1}{k_2})},\quad x_0,x_1\in\mathbb{R}^n,\,t_0\in\mathbb{R}.
	\end{split}
\end{equation}
\end{theorem}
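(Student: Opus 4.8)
The plan is to apply Theorem~\ref{thm1} with $n_1=n$, $n_2=n+1$, taking
\[
f_1(x)=u_0(x),\qquad f_2(t,x)=\mathbbm{1}_{[0,T]}(t)\,\mathbbm{1}_{\Omega}(x)\,u(t,x),\quad u=e^{t\Delta}u_0,
\]
so that the weighted spacetime quantity in \eqref{e128} is exactly the localization factor of $f_2$ built on $\mathbb{R}^{n+1}$. This is the exact analogue of the argument for Theorem~\ref{thm5}, the only change being that the unweighted observability of the Schr\"odinger flow is replaced by the band-limited weighted observability \eqref{e127} for the heat flow. The exponents $a_i,b_i,k_i$ are those in the statement, and one chooses auxiliary exponents $q_i,m_i$ so that the interval conditions \eqref{e2.3} with $n_2=n+1$ reduce exactly to \eqref{e1.25}, so that $\tfrac1{q_i}-\tfrac1{m_i}$ collapses the powers in \eqref{e2.5} to the ones appearing in \eqref{e128}, and so that $f_1\in L^{q_1}\cap L^{k_1}\cap L^{m_1}$ and $f_2\in L^{q_2}\cap L^{k_2}\cap L^{m_2}$; here the hypothesis $\mathrm{supp}\,\hat u_0\subset\{|\xi|<R\}$ is used to see, via the Bernstein--Nikolskii inequality, that $u_0$ and each $u(t)$ are band-limited and hence lie in every higher $L^p$, while the a priori assumption $u\in L^{k_2}_{t,x}([0,T]\times\Omega)$ controls the low exponents.

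The heart of the matter is to verify the two inequalities \eqref{e2.4}. For the upper bound $\|f_2\|_{L^{m_2}(\mathbb{R}^{n+1})}\le C_1\|u_0\|_{L^{q_1}(\mathbb{R}^n)}$, estimate $\|f_2\|_{L^{m_2}(\mathbb{R}^{n+1})}\le\|u\|_{L^{m_2}_{t,x}([0,T]\times\mathbb{R}^n)}$ and use the $L^{q_1}_x\to L^{m_2}_x$ smoothing of the heat semigroup, $\|e^{t\Delta}g\|_{L^{m_2}_x}\lesssim t^{-\frac n2(\frac1{q_1}-\frac1{m_2})}\|g\|_{L^{q_1}_x}$, followed by integration in $t\in(0,T]$; integrability of the time singularity at $t=0$ is where the constraint $k_2<\frac{2n+4}{n}=\frac{2(n+2)}{n}$ enters (the exponent $\frac{2(n+2)}{n}$ being the spacetime Strichartz exponent for $\mathbb{R}^{1+n}$), and the resulting $C_1$ depends only on $T$, $n$ and the exponents, not on $R$; equivalently one may simply invoke the heat-Strichartz estimate $\|e^{t\Delta}u_0\|_{L^{2(n+2)/n}_{t,x}([0,T]\times\mathbb{R}^n)}\lesssim\|u_0\|_{L^2(\mathbb{R}^n)}$. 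For the lower bound $\|f_2\|_{L^{q_2}([0,T]\times\Omega)}\ge C_2\|u_0\|_{L^{m_1}(\mathbb{R}^n)}$, start from the band-limited observability \eqref{e127}, $\|u\|_{L^2_{t,x}([0,T]\times\Omega)}\gtrsim_{T,\Omega}e^{-TR^2}\|u_0\|_{L^2(\mathbb{R}^n)}$, and transfer this $L^2$ inequality to the required exponents by band-limitedness: Bernstein--Nikolskii moves $\|u_0\|_{L^2}$ to (a multiple of $R$ times) $\|u_0\|_{L^{m_1}}$ on the right, and a Logvinenko--Sereda (Kovrijkine) estimate for thick sets together with a Bernstein inequality moves $\|u\|_{L^2_{t,x}([0,T]\times\Omega)}$ down to $\|u\|_{L^{q_2}_{t,x}([0,T]\times\Omega)}$ on the left, the powers of $R$ so produced being arranged, through the choice of $q_i,m_i$, to recombine into the exponent displayed in \eqref{e128}. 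This gives $C_2\gtrsim_{T,\Omega,n}e^{-TR^2}\cdot(\text{a fixed power of }R)$ while $C_1$ carries no $R$.

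Substituting $C_1,C_2$ into \eqref{e2.5} and simplifying — the chosen $q_i,m_i$ make the two localization factors appear to the powers $\tfrac1{a_2}+\tfrac{b_2}{n+1}-\tfrac1{k_2}$ and $\tfrac1{a_1}+\tfrac{b_1}{n}-\tfrac1{k_1}$ respectively — the right-hand side becomes a power of $C_2/C_1$ which reduces to $e^{-TR^2(\frac1{a_1}+\frac{b_1}{n}-\frac1{k_1})(\frac1{a_2}+\frac{b_2}{n+1}-\frac1{k_2})}$, i.e.\ \eqref{e128}. As for Theorem~\ref{thm5}, no linear structure of the map $u_0\mapsto f_2$ is used beyond the two a priori estimates in \eqref{e2.4}.

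The step I expect to be the main obstacle is the simultaneous calibration of the auxiliary exponents $q_1,q_2,m_1,m_2$: they must lie in the ranges forced by \eqref{e2.3} with $n_2=n+1$ (which one must check reduces exactly to \eqref{e1.25}), must keep $u_0$ in $L^{q_1}\cap L^{k_1}\cap L^{m_1}$ and the truncated solution $f_2$ in $L^{q_2}\cap L^{k_2}\cap L^{m_2}$ (using the band-limit and the hypothesis $u\in L^{k_2}_{t,x}$), must render the heat-kernel time integral convergent, and must be arranged so that the powers of $R$ generated by the two Bernstein-type transfers in the lower bound recombine with the reshuffling of exponents in \eqref{e2.5} to leave exactly the factor $e^{-TR^2}$ in the conclusion rather than a spurious $e^{-cTR^2}$ with an off constant $c$. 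Tracking these constraints and confirming that an admissible choice exists for all parameters allowed by \eqref{e1.25}, including the endpoint cases, is the bulk of the work; the individual PDE and harmonic-analysis inputs — heat smoothing and heat-Strichartz, Bernstein--Nikolskii, Logvinenko--Sereda for thick sets, and the cited observability \eqref{e127} — are standard.
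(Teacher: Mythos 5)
Your skeleton coincides with the paper's: apply Theorem \ref{thm1} with $n_1=n$, $n_2=n+1$, $f_1=u_0$, $f_2=\mathbbm{1}_{[0,T]}(t)\mathbbm{1}_\Omega(x)e^{t\Delta}u_0$, obtain the first inequality in \eqref{e2.4} from heat-kernel smoothing integrated in time (this is indeed where $k_2<\frac{2n+4}{n}$ enters), and obtain the second from the band-limited observability \eqref{e127}. The genuine gap is in your lower-bound step. The paper never transfers the $L^2$ observability to other exponents: it fixes $m_1=q_2=2$ and puts all the freedom into $q_1=p$, $m_2=q$ with $1\le p\le q<\frac{n+2}{n}p$, $p<2<q$ (the existence of such a pair is exactly what \eqref{e1.25} guarantees, cf. \eqref{ee4.2}--\eqref{eee4.2}), so that \eqref{e127} is used verbatim and the only $R$-dependence in $C_2/C_1$ is the factor $e^{-TR^2}$. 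Your plan to move $\|u_0\|_{L^2}$ to $\|u_0\|_{L^{m_1}}$ by Bernstein--Nikolskii and to push $\|u\|_{L^2_{t,x}([0,T]\times\Omega)}$ down to $\|u\|_{L^{q_2}_{t,x}([0,T]\times\Omega)}$ by Logvinenko--Sereda plus Bernstein cannot produce \eqref{e128}: the Logvinenko--Sereda/Kovrijkine constant for a thick set and Fourier support in a ball of radius $R$ is of size $e^{cLR}$, Bernstein contributes honest powers of $R$, and $[0,T]\times\Omega$ has infinite measure so H\"older cannot replace these tools; none of these factors can ``recombine'' with $e^{-TR^2}$ into the bare exponential on the right of \eqref{e128} with an implicit constant independent of $R$ (which is required, since $R$ is not among the subscripts of $\gtrsim$). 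You yourself flag this calibration as the main obstacle and leave it unverified; as designed it fails, and the cure is simply not to leave $L^2$ in those two slots.

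Two smaller points. First, your alternative of invoking the endpoint ``heat-Strichartz'' bound $\|e^{t\Delta}u_0\|_{L^{2(n+2)/n}_{t,x}}\lesssim\|u_0\|_{L^2}$ corresponds to $q_1=2$, which is not admissible in Theorem \ref{thm1} since \eqref{e2.3} demands $q_1<m_1=2$ strictly; the non-endpoint smoothing-plus-time-integration route (the paper's \eqref{ee4.2}) is the one that fits. Second, your assertion that the localization factors ``collapse'' to the powers $\frac1{a_2}+\frac{b_2}{n+1}-\frac1{k_2}$ and $\frac1{a_1}+\frac{b_1}{n}-\frac1{k_1}$ requires the two prefactors $\frac1{q_1}-\frac1{m_1}$ and $\frac1{q_2}-\frac1{m_2}$ in \eqref{e2.5} to coincide so that the inequality can be raised to a common power; with $m_1=q_2=2$ this forces $q=p'$, and the renormalization also rescales the exponent of $e^{-TR^2}$, so this is a point you must track explicitly rather than assume. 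Once $m_1=q_2=2$ is fixed, the band-limit hypothesis is needed only inside \eqref{e127}, not to place $u_0$ or $u(t)$ in higher $L^p$ spaces.
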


\subsection{Moment Growth for a Nonlinear Schr\"{o}dinger Equation}\

Finally we give an example of applying Theorem \ref{thm1} where $f_2$ depends on $f_1$ in a nonlinear way. Consider a nonlinear Schr\"{o}dinger equation in the form of
\begin{equation}\label{e6.3}
	\begin{cases}
		i\partial_tu+\Delta u+\phi(t,x)|u|^{m-1}u=0,\quad(t,x)\in\mathbb{R}^{1+n},\\
		u(0,x)=u_0.
	\end{cases}
\end{equation}
The point is that, when considering $f_1(x)=u_0$ and $f_2(x)=u(t,x)$ in Theorem \ref{thm1}, if $\phi$ is real-valued, then the $L^2$ mass of the solution is conserved, and sufficiently good condition on $\phi$ will give an estimate of the solution $u$ the same as that of the free Schr\"{o}dinger equation, hence the situation is essentially reduced to the application in Theorem \ref{thm3}.

\begin{proposition}\label{prop6}
	Given $p\in\begin{cases}
		(2,\infty],\,&n=1,\\
		(2,\frac{2n}{n-2}),\,&n\geq2,
	\end{cases}$ $m\in\left(1,\min\{1+\frac4n,p-1\}\right]$, and real-valued $\phi\in L_{t,x}^\infty(\mathbb{R}^{1+n})$ which fulfills
\begin{align*}
\|\phi(t,x)\|_{L^{\frac{p}{p-m-1}}_x(\mathbb{R}^{n})}\leq C |t|^{\frac{nm}{2}(1-\frac2p)}\eta(t),\quad t\in\mathbb{R},
\end{align*}
for some constant $C>0$ and some non-negative valued function $\eta(t)$ satisfying
\begin{equation*}
	\int_{\mathbb{R}}\eta(t)dt\lesssim1\quad\text{and}\quad\sup\limits_{\frac{|t|}{2}\leq|s|\leq|t|,t\in\mathbb{R}}|t|\eta(s)\lesssim1.
\end{equation*}
Consider the Cauchy problem \eqref{e6.3} where $u_0\in L^{p'}(\mathbb{R}^n)\cap L^2(\mathbb{R}^n)$ with $0<\|u_0\|_{L^{p'}(\mathbb{R}^n)}\ll1$, and further assume $\|u_0\|_{L^2(\mathbb{R}^n)}\ll1$ when $m=1+\frac4n$.
	
	If $a_i\in(0,\infty]$ and $b_i,k_i\in(0,\infty)$ ($i=1,2$) satisfy
	\begin{equation*}
		\mbox{$\frac{n}{a_1^{-1}n+b_1}$}<p'\leq k_1\leq2\quad\text{and}\quad\mbox{$\frac{n}{a_2^{-1}n+b_2}$}<2\leq k_2\leq p,
	\end{equation*}
	then the unique solution $u\in C(\mathbb{R}:L^2(\mathbb{R}^n))$ to \eqref{e6.3} exists and satisfies
	\begin{equation*}
		\begin{split}
			&\left(\frac{\left\||x-x_0|^{b_1}u_0\right\|_{L_x^{a_1}(\mathbb{R}^n)}}{\|u_0\|_{L^{k_1}(\mathbb{R}^n)}}\right)^{\frac{1}{a_2}+\frac{b_2}{n}-\frac{1}{k_2}}\left(\frac{\left\||x-x_1|^{b_2}u(t,x)\right\|_{L_x^{a_2}(\mathbb{R}^n)}}{\|u(t,x)\|_{L_x^{k_2}(\mathbb{R}^n)}}\right)^{\frac{1}{a_1}+\frac{b_1}{n}-\frac{1}{k_1}}\\
			\gtrsim&_{n,a_i,b_i,k_i}|t|^{n(\frac{1}{a_1}+\frac{b_1}{n}-\frac{1}{k_1})(\frac{1}{a_2}+\frac{b_2}{n}-\frac{1}{k_2})},\quad t\in\mathbb{R}\setminus\{0\},\,x_0,x_1\in\mathbb{R}^n.
		\end{split}
	\end{equation*}
	In particular, taking $k_2=2$, the $L^2$ mass conservation of \eqref{e6.3} gives
	\begin{equation*}
		\left\||x-x_0|^bu(t,x)\right\|_{L_x^a(\mathbb{R}^n)}\gtrsim_{u_0,n,a,b}|t|^{n(\frac1a+\frac bn-\frac12)},\quad t\in\mathbb{R}\setminus\{0\},\,x_0\in\mathbb{R}^n,
	\end{equation*}
	provided $a\in(0,\infty]$, $b\in(0,\infty)$, $\frac{n}{a^{-1}n+b}<2$ and that $u_0$ has sufficient decay.
\end{proposition}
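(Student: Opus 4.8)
The plan is to apply Theorem~\ref{thm1} with $n_1=n_2=n$, $f_1(x)=u_0(x)$ and $f_2(x)=u(t,x)$ the solution of \eqref{e6.3} at a fixed time $t\neq0$, and with the parameter choice $q_1=p'$, $m_1=2$, $q_2=2$, $m_2=p$, keeping $a_i,b_i,k_i$ as in the statement. With these choices $\tfrac1{q_1}-\tfrac1{m_1}=\tfrac1{q_2}-\tfrac1{m_2}=\tfrac12-\tfrac1p>0$, and the hypotheses $\tfrac{n}{a_1^{-1}n+b_1}<p'\le k_1\le2$, $\tfrac{n}{a_2^{-1}n+b_2}<2\le k_2\le p$, together with $p>2$, are exactly condition \eqref{e2.3}; moreover $u_0\in L^{p'}\cap L^2$ forces $u_0\in L^{k_1}$ by interpolation, and once $u(t)\in L^2\cap L^p$ is known it gives $u(t)\in L^{k_2}$ by interpolation. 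Thus everything reduces to producing a global solution $u\in C(\R:L^2(\R^n))$ obeying the two comparison bounds in \eqref{e2.4}, namely $\|u(t)\|_{L^p(\R^n)}\lesssim_{n,p,m}|t|^{-n(\frac12-\frac1p)}\|u_0\|_{L^{p'}(\R^n)}$ (so that $C_1\sim|t|^{-n(\frac12-\frac1p)}$) and $\|u(t)\|_{L^2(\R^n)}\ge\|u_0\|_{L^2(\R^n)}$ (so $C_2=1$, and $u(t)\neq0$ since $\|u_0\|_{L^2}>0$). Feeding $C_2/C_1\sim|t|^{n(\frac12-\frac1p)}$ into \eqref{e2.5} and raising both sides to the power $(\tfrac12-\tfrac1p)^{-1}$ then produces the displayed inequality, and the final moment growth is the case $k_2=2$, read off exactly as in Theorem~\ref{thm3}.

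\textbf{Existence and mass conservation.} Since $1<m\le\min\{1+\tfrac4n,p-1\}$ and $\phi\in L^\infty_{t,x}(\R^{1+n})$, the standard Strichartz theory for $e^{it\Delta}$ gives local well-posedness of \eqref{e6.3} in $L^2(\R^n)$ (with a small-$L^2$-data assumption when $m=1+\tfrac4n$, which is why this is imposed in the mass-critical case). As $\phi$ is real-valued, pairing \eqref{e6.3} with $\bar u$, integrating over $\R^n$ and taking imaginary parts annihilates both the Laplacian term and the term $\int\phi|u|^{m+1}$, so $\tfrac{d}{dt}\|u(t)\|_{L^2}^2=0$; hence $\|u(t)\|_{L^2}=\|u_0\|_{L^2}$ and the local solution extends to a unique global $u\in C(\R:L^2(\R^n))$, giving the second inequality of \eqref{e2.4} with $C_2=1$.

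\textbf{The dispersive estimate.} From the Duhamel formula $u(t)=e^{it\Delta}u_0+i\int_0^te^{i(t-s)\Delta}\big(\phi(s)|u(s)|^{m-1}u(s)\big)\wrt s$, the bound $\|e^{i\tau\Delta}g\|_{L^p}\lesssim|\tau|^{-n(\frac12-\frac1p)}\|g\|_{L^{p'}}$, and the Hölder splitting $\tfrac1{p'}=\tfrac{p-m-1}{p}+\tfrac mp$, one has
\[
\|\phi(s)|u(s)|^{m-1}u(s)\|_{L^{p'}}\le\|\phi(s)\|_{L^{p/(p-m-1)}_x}\|u(s)\|_{L^p}^m\lesssim|s|^{\frac{nm}{2}(1-\frac2p)}\eta(s)\,\|u(s)\|_{L^p}^m .
\]
Running a continuity/bootstrap argument under the a priori hypothesis $\|u(s)\|_{L^p}\le2C_0|s|^{-n(\frac12-\frac1p)}\|u_0\|_{L^{p'}}$, the growing power $|s|^{\frac{nm}{2}(1-\frac2p)}=|s|^{nm(\frac12-\frac1p)}$ is exactly cancelled by $\|u(s)\|_{L^p}^m$, leaving $\|\phi(s)|u(s)|^{m-1}u(s)\|_{L^{p'}}\lesssim(2C_0)^m\|u_0\|_{L^{p'}}^m\,\eta(s)$, so the problem collapses to the time-convolution estimate
\[
\int_0^t|t-s|^{-n(\frac12-\frac1p)}\eta(s)\wrt s\lesssim|t|^{-n(\frac12-\frac1p)},\qquad t\neq0 .
\]
On $|s|\le|t|/2$ this follows from $|t-s|\gtrsim|t|$ and $\int_\R\eta\lesssim1$; on $|t|/2\le|s|\le|t|$ it follows from $\eta(s)\lesssim|t|^{-1}$ together with the integrability of $|t-s|^{-n(\frac12-\frac1p)}$ near $s=t$, valid because $n(\tfrac12-\tfrac1p)<1$ on the stated range of $p$ (one checks $\tfrac12-\tfrac1p<\tfrac1n$ there), which contributes a factor $|t|^{1-n(\frac12-\frac1p)}$. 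Since $m>1$ and $\|u_0\|_{L^{p'}}\ll1$, the nonlinear term $\lesssim(2C_0)^m\|u_0\|_{L^{p'}}^m|t|^{-n(\frac12-\frac1p)}$ is at most $C_0\|u_0\|_{L^{p'}}|t|^{-n(\frac12-\frac1p)}$, which closes the bootstrap and yields $\|u(t)\|_{L^p}\lesssim|t|^{-n(\frac12-\frac1p)}\|u_0\|_{L^{p'}}$; the short-time portion of the continuity argument is controlled by the local Strichartz solution.

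\textbf{Main obstacle.} The substitution into Theorem~\ref{thm1} and the conservation law are bookkeeping; the real work is the previous paragraph, i.e. synchronizing the global construction of $u$ with the time-weighted $L^p$ decay so that the growing weight in the bound for $\phi$ is absorbed by the decay of $\|u(s)\|_{L^p}^m$ and the residual convolution against $\eta$ reproduces the sharp rate $|t|^{-n(\frac12-\frac1p)}$. This is exactly where the hypotheses $\int_\R\eta\lesssim1$ and $\sup_{|t|/2\le|s|\le|t|}|t|\eta(s)\lesssim1$, the smallness $\|u_0\|_{L^{p'}}\ll1$ (and $\|u_0\|_{L^2}\ll1$ at the mass-critical exponent), and the restriction on $p$ ensuring $n(\tfrac12-\tfrac1p)<1$ all come into play.
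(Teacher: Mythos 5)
Your proposal is correct and follows essentially the same route as the paper: the same application of Theorem~\ref{thm1} with $q_1=p'$, $m_1=2$, $q_2=2$, $m_2=p$, mass conservation from real-valued $\phi$, and the weighted dispersive decay $\|u(t)\|_{L^p}\lesssim|t|^{-n(\frac12-\frac1p)}\|u_0\|_{L^{p'}}$ obtained from Duhamel, H\"older with exponent $\frac{p}{p-m-1}$, and the two conditions on $\eta$ via the $[0,t/2]$ and $[t/2,t]$ splitting, exactly as in Lemma~\ref{lm4}. The only deviation is that the paper establishes this decay by a contraction mapping in the weighted space $X_p$ rather than your continuity/bootstrap on the $L^2$ solution, which tacitly presupposes that $|s|^{n(\frac12-\frac1p)}\|u(s)\|_{L^p}$ is finite and continuous in $s\neq0$ (the local $L^2$ Strichartz theory does not give pointwise-in-time $L^p_x$ control), so the fixed-point formulation is the cleaner way to make that step airtight.
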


\subsection{Plan of the paper}\

We will prove Theorem \ref{thm1} in Section \ref{sec2}, Theorems \ref{thm3} and \ref{thm4} in Section \ref{sec3}, Theorems \ref{thm5} and \ref{thm5'} in Section \ref{sec4}, and Proposition \ref{prop6} in Section \ref{sec5}.

\section{Proof of Theorem \ref{thm1}}\label{sec2}

Theorem \ref{thm1} can be proved using two easy lemmas.

The first one has actually been proved in Dias-Luef-Prata \cite[Theorem 9]{DLP} in the non-endpoint case $p<a\neq\infty$. We give a very concise proof here.

\begin{lemma}\label{lm1}
	Let $n\in\mathbb{N}_+$, $a\in(0,\infty]$, $b\in(0,\infty)$, $s\in[1,\infty)$ and $p\in(\frac{n}{a^{-1}n+b},a]\setminus\{\infty\}$. It follows for all $0\neq f\in L^p(\mathbb{R}^n)\cap L^{ps'}(\mathbb{R}^n)$ and $x_0\in\mathbb{R}^n$ that
	\begin{equation}\label{e2.1}
		\left\||x-x_0|^bf\right\|_{L_x^a(\mathbb{R}^n)}\gtrsim_{n,a,b,s,p}\|f\|_{L^p(\mathbb{R}^n)}\left(\frac{\|f\|_{L^p(\mathbb{R}^n)}}{\|f\|_{L^{ps'}(\mathbb{R}^n)}}\right)^{s\left((\frac1a+\frac bn)p-1\right)}.
	\end{equation}
\end{lemma}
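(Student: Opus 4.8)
The plan is to split the integral defining $\left\||x-x_0|^bf\right\|_{L_x^a}$ (or rather its natural companion integral against $|f|^p$) over a ball $B_R = B(x_0,R)$ and its complement, optimizing in the radius $R$ at the end. Concretely, I would estimate $\|f\|_{L^p}^p = \int_{B_R}|f|^p + \int_{B_R^c}|f|^p$. On the exterior region $B_R^c$ one has $|x-x_0|^{b}\ge R^{b}$, so $\int_{B_R^c}|f|^p \le R^{-bp}\int_{B_R^c}|x-x_0|^{bp}|f|^p \le R^{-bp}\left\||x-x_0|^bf\right\|_{L^a}^{p}\cdot(\text{a volume factor})$ after a H\"older step, since $a$ need not equal $p$. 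On the interior region $B_R$ one uses H\"older with exponents tuned to $s$: $\int_{B_R}|f|^p \le \left(\int_{B_R}|f|^{ps'}\right)^{1/s'}|B_R|^{1/s} \lesssim R^{n/s}\|f\|_{L^{ps'}}^{p}$.

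The key computation is then to balance $R^{-bp}M^p$ against $R^{n/s}\|f\|_{L^{ps'}}^p$, where I abbreviate $M=\left\||x-x_0|^bf\right\|_{L^a}$ and keep track of the auxiliary H\"older factor coming from $a\ne p$. The exponent arithmetic in the condition $p>\frac{n}{a^{-1}n+b}$, equivalently $(\frac1a+\frac bn)p>1$, is exactly what makes the relevant power of $R$ positive so that the optimization $R^{\text{something}}\sim M^p/\|f\|_{L^{ps'}}^p$ is solvable with a positive exponent; plugging the optimal $R$ back in and collecting exponents should yield precisely the power $s\left((\frac1a+\frac bn)p-1\right)$ asserted in \eqref{e2.1}. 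I would handle the H\"older step on $B_R^c$ by writing $\int_{B_R^c}|f|^p = \int_{B_R^c}\big(|x-x_0|^b|f|\big)^p |x-x_0|^{-bp}$ and applying H\"older with exponents $\frac{a}{p}$ and $(\frac ap)'$ (when $p<a<\infty$), which produces $M^p$ times $\big(\int_{B_R^c}|x-x_0|^{-bp(a/p)'}\big)^{1/(a/p)'}$; this last integral converges at infinity precisely when $bp(a/p)'>n$, and a short calculation shows this is again equivalent to $(\frac1a+\frac bn)p>1$, giving a factor $\sim R^{-bp + n/(a/p)'}$ — I should double-check that the total $R$-power there combines correctly with the naive $R^{-bp}$ heuristic above.

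The main obstacle I anticipate is the \textbf{endpoint cases} $a=\infty$ and $a=p$ (the hypothesis allows $p\le a\le\infty$ but $p\ne\infty$). When $a=\infty$, the exterior estimate is cleaner — $\int_{B_R^c}|f|^p\le R^{-bp}\|\,|x-x_0|^bf\,\|_{L^\infty}^p\cdot\text{nothing}$ fails since we still need a volume factor, so one instead bounds $\int_{B_R^c}|f|^p$ directly only if $f\in L^p$, which we have, but then the $R$-dependence must come entirely from absolute continuity of the integral; the correct move is rather to note $|f(x)|\le |x-x_0|^{-b}M_\infty$ pointwise on $B_R^c$ where $M_\infty = \|\,|x-x_0|^b f\,\|_{L^\infty}$, so $\int_{B_R^c}|f|^p\le M_\infty^p\int_{B_R^c}|x-x_0|^{-bp}\sim M_\infty^p R^{n-bp}$, convergent iff $bp>n$, which is the $a=\infty$ instance of the hypothesis. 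When $a=p$ the H\"older step on $B_R^c$ is vacuous and one directly gets $\int_{B_R^c}|f|^p\le R^{-bp}M^p$. So the proof naturally bifurcates, but all three cases run on the same ball-splitting-and-optimization scheme; I would present the generic case $p<a<\infty$ in detail and indicate the trivial modifications for the two endpoints. A secondary point to be careful about is that the stated conclusion has no lower-order or additive terms — it is a clean multiplicative inequality — which is automatic from the scaling-homogeneous optimization but worth confirming that the implied constant genuinely depends only on $n,a,b,s,p$ and not on $f$ or $x_0$.
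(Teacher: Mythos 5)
Your proposal is correct and follows essentially the same route as the paper: split $\|f\|_{L^p}^p$ at a radius, apply H\"older with exponents $s',s$ on the ball and with $\frac ap,(\frac ap)'$ on the exterior (where $p>\frac{n}{a^{-1}n+b}$ gives convergence of $\int_{|x|\ge R}|x|^{-bp(a/p)'}$), and choose the radius to balance the two terms; the paper simply fixes the radius $T\sim(\|f\|_{L^p}/\|f\|_{L^{ps'}})^{ps/n}$ up front so the interior part is $\frac12\|f\|_{L^p}^p$, and treats the endpoints $a=p$ and $a=\infty$ uniformly by writing the exterior factor as $\||x|^{-bp}\|_{L^{1/(1-a^{-1}p)}(|x|\ge T)}$ instead of bifurcating into cases as you do.
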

\begin{proof}
	We may assume $x_0=0$ by translation. Set
	\begin{equation*}
		T=v_n^{-\frac1n}\left(\frac{\|f\|_{L^p(\mathbb{R}^n)}}{2\|f\|_{L^{ps'}(\mathbb{R}^n)}}\right)^\frac{ps}{n},\quad v_n=\int_{|x|\leq1}dx,
	\end{equation*}
	then
	\begin{equation*}
		\int_{|x|\leq T}|f|^pdx\leq(T^nv_n)^\frac1s\|f\|_{L^{ps'}(\mathbb{R}^n)}^p=\mbox{$\frac12$}\|f\|_{L^p(\mathbb{R}^n)}^p,
	\end{equation*}
	which means
	\begin{equation*}
		\begin{split}
			\mbox{$\frac12$}\|f\|_{L^p(\mathbb{R}^n)}^p\leq\int_{|x|\geq T}|f|^pdx\leq\left\||x|^{-bp}\right\|_{L^\frac{1}{1-a^{-1}p}(|x|\geq T)}\left\||x|^bf\right\|_{L^a(\mathbb{R}^n)}^p.
		\end{split}
	\end{equation*}
	Since $p\in(\frac{n}{a^{-1}n+b},a]\setminus\{\infty\}$ implies $\left\||x|^{-bp}\right\|_{L^\frac{1}{1-a^{-1}p}(|x|\geq T)}\sim T^{n-(a^{-1}n+b)p}$, \eqref{e2.1} immediately follows.
\end{proof}

The second lemma is nothing but H\"{o}lder's inequality.

\begin{lemma}\label{lm2}
	Let $k,p,q,m\in(0,\infty]$ satisfy
	\begin{equation*}
		\begin{cases}
			m\leq k\leq p,\\
			m<q\leq p,
		\end{cases}\quad\text{or}\quad\begin{cases}
		p\leq k\leq m,\\
		p\leq q<m,
	\end{cases}
	\end{equation*}
	then
	\begin{equation}\label{e2.2}
		\frac{\|f\|_{L^p(\mathbb{R}^n)}}{\|f\|_{L^k(\mathbb{R}^n)}}\geq\left(\frac{\|f\|_{L^q(\mathbb{R}^n)}}{\|f\|_{L^m(\mathbb{R}^n)}}\right)^\frac{p^{-1}-k^{-1}}{q^{-1}-m^{-1}},\quad0\neq f\in\mathscr{S}(\mathbb{R}^n).
	\end{equation}
\end{lemma}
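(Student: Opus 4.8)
This is a pure interpolation statement, and the plan is to reduce everything to the log-convexity of $p \mapsto \log\|f\|_{L^p}$, i.e. the standard three-lines / H\"older interpolation inequality: if $p$ lies between two exponents $\alpha$ and $\beta$ with $\frac1p = \frac{1-\theta}{\alpha} + \frac{\theta}{\beta}$, $\theta \in [0,1]$, then $\|f\|_{L^p} \le \|f\|_{L^\alpha}^{1-\theta}\|f\|_{L^\beta}^{\theta}$. First I would treat the case $m \le k \le p$, $m < q \le p$. The idea is to write $k$ as an interpolation exponent between $m$ and $p$, since $m \le k \le p$ gives $\frac1k = \frac{1-\lambda}{m} + \frac{\lambda}{p}$ for some $\lambda \in [0,1]$; similarly write $q$ between $m$ and $p$, so $\frac1q = \frac{1-\mu}{m} + \frac{\mu}{p}$ with $\mu \in [0,1]$ (here $m < q \le p$ forces $\mu \in (0,1]$, which is what makes the ratio $\frac{p^{-1}-k^{-1}}{q^{-1}-m^{-1}}$ well-defined with nonzero denominator). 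Applying H\"older to each, $\|f\|_{L^k} \le \|f\|_{L^m}^{1-\lambda}\|f\|_{L^p}^{\lambda}$ and $\|f\|_{L^q} \le \|f\|_{L^m}^{1-\mu}\|f\|_{L^p}^{\mu}$.

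The second inequality rearranges to $\left(\frac{\|f\|_{L^q}}{\|f\|_{L^m}}\right)^{1/\mu} \le \frac{\|f\|_{L^p}}{\|f\|_{L^m}}$ (raising to the power $1/\mu > 0$ preserves the inequality), while the first rearranges to $\left(\frac{\|f\|_{L^q}}{\|f\|_{L^m}}\right) \le \left(\frac{\|f\|_{L^p}}{\|f\|_{L^m}}\right)^{\mu}$... let me instead just combine them directly: from the first, $\frac{\|f\|_{L^p}}{\|f\|_{L^k}} \ge \left(\frac{\|f\|_{L^p}}{\|f\|_{L^m}}\right)^{1-\lambda}$, and from the second, $\frac{\|f\|_{L^p}}{\|f\|_{L^m}} \ge \left(\frac{\|f\|_{L^q}}{\|f\|_{L^m}}\right)^{1/\mu}$; chaining gives $\frac{\|f\|_{L^p}}{\|f\|_{L^k}} \ge \left(\frac{\|f\|_{L^q}}{\|f\|_{L^m}}\right)^{(1-\lambda)/\mu}$. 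It then remains to check the bookkeeping identity $\frac{1-\lambda}{\mu} = \frac{p^{-1}-k^{-1}}{q^{-1}-m^{-1}}$, which follows by solving $\lambda = \frac{m^{-1}-k^{-1}}{m^{-1}-p^{-1}}$ and $\mu = \frac{m^{-1}-q^{-1}}{m^{-1}-p^{-1}}$ from the interpolation relations and simplifying; note $1-\lambda = \frac{k^{-1}-p^{-1}}{m^{-1}-p^{-1}}$, so $\frac{1-\lambda}{\mu} = \frac{k^{-1}-p^{-1}}{m^{-1}-q^{-1}} = \frac{p^{-1}-k^{-1}}{q^{-1}-m^{-1}}$, as claimed.

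The second case, $p \le k \le m$, $p \le q < m$, is entirely symmetric: now $k$ and $q$ interpolate between $p$ and $m$ in the other order, so one writes $\frac1k = \frac{1-\lambda}{p} + \frac{\lambda}{m}$ and $\frac1q = \frac{1-\mu}{p} + \frac{\mu}{m}$, applies H\"older in the same way, and verifies the same exponent identity; the strict inequality $q < m$ again ensures the relevant denominator is nonzero. I would present both cases together, perhaps remarking that one reduces to the other by relabeling, so as not to repeat the algebra. The main (and only mild) obstacle is purely the arithmetic of matching the interpolation parameters to the stated exponent $\frac{p^{-1}-k^{-1}}{q^{-1}-m^{-1}}$ and keeping track of which direction each inequality points when one exponentiates by a positive power; there is no analytic content beyond H\"older's inequality, and the restriction $0 \ne f \in \mathscr{S}(\mathbb{R}^n)$ is only there to guarantee all the norms are finite and positive so that the ratios and fractional powers make sense.
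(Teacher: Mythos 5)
Your proof is correct and is essentially the paper's argument: both rest solely on the log-convexity (H\"older interpolation) of $L^p$ norms applied twice, writing $k$ and $q$ as intermediate exponents between $m$ and $p$, followed by the same exponent bookkeeping. The only cosmetic difference is that the paper multiplies the two H\"older inequalities (after raising the second to the power $\frac{p^{-1}-k^{-1}}{q^{-1}-m^{-1}}$) while you chain them, which is the same computation.
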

\begin{proof}
	Since $m\neq p$ and $m\neq q$, one checks with the H\"{o}lder's inequality that
	\begin{equation*}
		\|f\|_{L^k(\mathbb{R}^n)}\leq\|f\|_{L^p(\mathbb{R}^n)}^\frac{m^{-1}-k^{-1}}{m^{-1}-p^{-1}}\|f\|_{L^m(\mathbb{R}^n)}^\frac{k^{-1}-p^{-1}}{m^{-1}-p^{-1}},
	\end{equation*}
	and that
	\begin{equation*}
		\|f\|_{L^q(\mathbb{R}^n)}^\frac{p^{-1}-k^{-1}}{q^{-1}-m^{-1}}\leq\|f\|_{L^p(\mathbb{R}^n)}^\frac{k^{-1}-p^{-1}}{m^{-1}-p^{-1}}\|f\|_{L^m(\mathbb{R}^n)}^\frac{(k^{-1}-p^{-1})(p^{-1}-q^{-1})}{(m^{-1}-p^{-1})(q^{-1}-m^{-1})}.
	\end{equation*}
	Multiplying them gives \eqref{e2.2}.
\end{proof}

Now we are ready to prove Theorem \ref{thm1}.

\begin{proof}[Proof of Theorem \ref{thm1}]
	\eqref{e2.3} guarantees the existence of $p_i\in(\frac{n_i}{a_i^{-1}n_i+b_i},\min\{a_i,k_i,q_i\}]$ and $s_i\in[1,\infty)$ with $p_is_i'=k_i$. Note that $p_i\leq q_i<\infty$, so we can use Lemma \ref{lm1} and Lemma \ref{lm2} to deduce
	\begin{equation*}
		\begin{split}
			\frac{\left\||\cdot-x_i|^{b_i}f_i\right\|_{L^{a_i}(\mathbb{R}^{n_i})}}{\|f_i\|_{L^{k_i}(\mathbb{R}^{n_i})}}\gtrsim\left(\frac{\|f_i\|_{L^{p_i}(\mathbb{R}^{n_i})}}{\|f_i\|_{L^{k_i}(\mathbb{R}^{n_i})}}\right)^{s_i\left((\frac{1}{a_i}+\frac{b_i}{n_i})p_i-1\right)+1}\gtrsim\left(\frac{\|f_i\|_{L^{q_i}(\mathbb{R}^{n_i})}}{\|f_i\|_{L^{m_i}(\mathbb{R}^{n_i})}}\right)^{\frac{p_i^{-1}-k_i^{-1}}{q_i^{-1}-m_i^{-1}}\left(s_i\left((\frac{1}{a_i}+\frac{b_i}{n_i})p_i-1\right)+1\right)}.
		\end{split}
	\end{equation*}
	One can use $k_i^{-1}=(p_is_i')^{-1}$ to check
	\begin{equation*}
		\frac{p_i^{-1}-k_i^{-1}}{q_i^{-1}-m_i^{-1}}\left(s_i\left((\mbox{$\frac{1}{a_i}+\frac{b_i}{n_i}$})p_i-1\right)+1\right)=\frac{\frac{1}{a_i}+\frac{b_i}{n_i}-\frac{1}{k_i}}{\frac{1}{q_i}-\frac{1}{m_i}},
	\end{equation*}
	and therefore
	\begin{equation*}
		\begin{split}
			&\left(\frac{\left\||\cdot-x_1|^{b_1}f_1\right\|_{L^{a_1}(\mathbb{R}^{n_1})}}{\|f_1\|_{L^{k_1}(\mathbb{R}^{n_1})}}\right)^{(\frac{1}{q_1}-\frac{1}{m_1})(\frac{1}{a_2}+\frac{b_2}{n_2}-\frac{1}{k_2})}\left(\frac{\left\||\cdot-x_2|^{b_2}f_2\right\|_{L^{a_2}(\mathbb{R}^{n_2})}}{\|f_2\|_{L^{k_2}(\mathbb{R}^{n_2})}}\right)^{(\frac{1}{q_2}-\frac{1}{m_2})(\frac{1}{a_1}+\frac{b_1}{n_1}-\frac{1}{k_1})}\\
			\gtrsim&_{n_i,a_i,b_i,q_i,k_i,m_i\,(i=1,2)}\left(\frac{\|f_1\|_{L^{q_1}(\mathbb{R}^{n_1})}}{\|f_1\|_{L^{m_1}(\mathbb{R}^{n_1})}}\frac{\|f_2\|_{L^{q_2}(\mathbb{R}^{n_2})}}{\|f_2\|_{L^{m_2}(\mathbb{R}^{n_2})}}\right)^{(\frac{1}{a_1}+\frac{b_1}{n_1}-\frac{1}{k_1})(\frac{1}{a_2}+\frac{b_2}{n_2}-\frac{1}{k_2})}.
		\end{split}
	\end{equation*}
	\eqref{e2.5} follows by this and \eqref{e1.11}.
\end{proof}

We give a corollary that will be convenient for the proofs in later sections considering $n_1=n_2=n$.
\begin{corollary}\label{cor1}
	Let $n\in\mathbb{N_+}$ $a_i\in(0,\infty]$ and $b_i\in(0,\infty)$ with $\frac{n}{a_i^{-1}n+b_i}<2$ ($i=1,2$). Suppose for some $p_i\in[1,2)\cap(\frac{n}{a_i^{-1}n+b_i},2)$ and $A,B>0$ that
	\begin{equation}\label{e2.6}
		\|f_2\|_{L^{p_1'}(\mathbb{R}^n)}\lesssim A^{\frac{2}{p_1}-1}\|f_1\|_{L^{p_1}(\mathbb{R}^n)},\quad\|f_2\|_{L^{p_2}(\mathbb{R}^n)}\gtrsim B^{\frac{2}{p_2'}-1}\|f_1\|_{L^{p_2'}(\mathbb{R}^n)},
	\end{equation}
	then when $k_1\in(\frac{n}{a_1^{-1}n+b_1},p_2']$ and $k_2\in(\frac{n}{a_2^{-1}n+b_2},p_1']$, we have
	\begin{equation}\label{e2.7}
		\begin{split}
			&\left(\frac{\left\||x-x_1|^{b_1}f_1\right\|_{L_x^{a_1}(\mathbb{R}^{n})}}{\|f_1\|_{L^{k_1}(\mathbb{R}^{n})}}\right)^{\frac{1}{a_2}+\frac{b_2}{n}-\frac{1}{k_2}}\left(\frac{\left\||x-x_2|^{b_2}f_2\right\|_{L_x^{a_2}(\mathbb{R}^{n})}}{\|f_2\|_{L^{k_2}(\mathbb{R}^{n})}}\right)^{\frac{1}{a_1}+\frac{b_1}{n}-\frac{1}{k_1}}\\
			\gtrsim&_{n,a_i,b_i,p_i,k_i}\left(\frac{B^{\frac{2}{p_2'}-1}}{A^{\frac{2}{p_1}-1}}\right)^{\frac{1}{\frac{1}{p_1}-\frac{1}{p_2'}}(\frac{1}{a_1}+\frac{b_1}{n}-\frac{1}{k_1})(\frac{1}{a_2}+\frac{b_2}{n}-\frac{1}{k_2})},\quad x_1,x_2\in\mathbb{R}^n.
		\end{split}
	\end{equation}
\end{corollary}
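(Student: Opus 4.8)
The plan is to deduce Corollary \ref{cor1} directly from Theorem \ref{thm1} by a specialization of parameters, with essentially no new analytic input. Take $n_1=n_2=n$ and set
\[
q_1=p_1,\quad m_1=p_2',\quad q_2=p_2,\quad m_2=p_1'
\]
in Theorem \ref{thm1}. With these choices the two hypotheses in \eqref{e2.6} are exactly the two estimates in \eqref{e2.4}, with constants $C_1\sim A^{\frac{2}{p_1}-1}$ and $C_2\sim B^{\frac{2}{p_2'}-1}$ (the implicit constants from $\lesssim$, $\gtrsim$ being absorbed into $C_1,C_2$). First I would verify the arithmetic constraints \eqref{e2.3}: the conditions $\frac{n}{a_1^{-1}n+b_1}<k_1\le p_2'=m_1$ and $\frac{n}{a_2^{-1}n+b_2}<k_2\le p_1'=m_2$ are precisely the stated hypotheses on $k_1,k_2$; the requirements $q_1=p_1<m_1=p_2'$ and $q_2=p_2<m_2=p_1'$ hold because $p_i<2$ forces $p_j'>2$; and $q_i=p_i>\frac{n}{a_i^{-1}n+b_i}$ is part of the hypothesis $p_i\in(\frac{n}{a_i^{-1}n+b_i},2)$. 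One also checks that the integrability hypothesis $0\neq f_i\in L^{q_i}\cap L^{k_i}\cap L^{m_i}$ of Theorem \ref{thm1} is in force — this is implicit in the finiteness of the norms in \eqref{e2.6} and \eqref{e2.7}; if any norm on the left of \eqref{e2.7} is infinite the inequality is trivial.

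The one point that needs a little care is the exponent bookkeeping. With the above choices one has the algebraic identity
\[
\frac{1}{q_1}-\frac{1}{m_1}=\frac{1}{p_1}-\frac{1}{p_2'}=\frac{1}{p_1}+\frac{1}{p_2}-1=\frac{1}{p_2}-\frac{1}{p_1'}=\frac{1}{q_2}-\frac{1}{m_2},
\]
so both factors $(\tfrac{1}{q_i}-\tfrac{1}{m_i})$ appearing in \eqref{e2.5} equal the common value $\delta:=\tfrac1{p_1}+\tfrac1{p_2}-1$, which is strictly positive since $p_1,p_2<2$. Consequently the left-hand side of \eqref{e2.5} is exactly the $\delta$-th power of the left-hand side of \eqref{e2.7}, while the right-hand side of \eqref{e2.5} is $\bigl(C_2/C_1\bigr)^{(\frac{1}{a_1}+\frac{b_1}{n}-\frac{1}{k_1})(\frac{1}{a_2}+\frac{b_2}{n}-\frac{1}{k_2})}$. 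Raising \eqref{e2.5} to the power $1/\delta$ (legitimate because $\delta>0$ and both sides are positive, with the implicit constant still depending only on $n,a_i,b_i,p_i,k_i$) and substituting $C_1\sim A^{\frac{2}{p_1}-1}$, $C_2\sim B^{\frac{2}{p_2'}-1}$ gives precisely \eqref{e2.7}, the exponent on the right becoming $\frac{1}{\frac{1}{p_1}-\frac{1}{p_2'}}(\frac{1}{a_1}+\frac{b_1}{n}-\frac{1}{k_1})(\frac{1}{a_2}+\frac{b_2}{n}-\frac{1}{k_2})$ since $1/\delta=\frac{1}{1/p_1-1/p_2'}$.

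There is no genuine obstacle here — the content is entirely contained in Theorem \ref{thm1}, and Corollary \ref{cor1} is a formal consequence. The only things one must not overlook are: (i) the choice $m_1=p_2'$, $m_2=p_1'$ (rather than the naive $m_i=p_i'$) is exactly what makes the two exponents $\tfrac{1}{q_i}-\tfrac{1}{m_i}$ coincide, so that the common power $\delta$ can be factored out; (ii) $\delta>0$, needed to raise \eqref{e2.5} to the $1/\delta$-th power; and (iii) tracking that, after this operation, the constants remain controlled by $n,a_i,b_i,p_i,k_i$. As a consistency check I would also note that the exponents $\frac{1}{a_i}+\frac{b_i}{n}-\frac{1}{k_i}$ are positive because $k_i>\frac{n}{a_i^{-1}n+b_i}$, so \eqref{e2.7} is indeed a nontrivial lower bound.
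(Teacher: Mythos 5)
Your proposal is correct and coincides with the paper's own proof: both specialize Theorem \ref{thm1} with $q_1=p_1$, $m_1=p_2'$, $q_2=p_2$, $m_2=p_1'$, verify \eqref{e2.3}, use the identity $\frac{1}{p_1}-\frac{1}{p_2'}=\frac{1}{p_2}-\frac{1}{p_1'}$ to factor out the common positive exponent, and raise to its reciprocal power. Your write-up is just a more detailed account of the same specialization.
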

\begin{proof}
	The statement follows from Theorem \ref{thm1} and the fact that $\frac{1}{p_1}-\frac{1}{p_2'}=\frac{1}{p_2}-\frac{1}{p_1'}$, if we check \eqref{e2.3} by
	\begin{equation*}
		\begin{split}
			\mbox{$\frac{n}{a_1^{-1}n+b_1}<k_1\leq p_2',\quad\frac{n}{a_1^{-1}n+b_1}<p_1<2<p_2',$}\\
			\mbox{$\frac{n}{a_2^{-1}n+b_2}<k_2\leq p_1',\quad\frac{n}{a_2^{-1}n+b_2}<p_2<2<p_1'$}.
		\end{split}
	\end{equation*}
\end{proof}

\begin{remark}\label{rk1}
	If for every $p_i\in[1,2)\cap(\frac{n}{a_i^{-1}n+b_i},2)$ ($i=1,2$), there exist $A,B>0$ for \eqref{e2.6} to hold, then when
	\begin{equation*}
		k_1\in\begin{cases}
			(\mbox{$\frac{n}{a_1^{-1}n+b_1}$},\infty],\quad&\text{if $\frac{n}{a_2^{-1}n+b_2}<1$},\\
			\left(\mbox{$\frac{n}{a_1^{-1}n+b_1},(\frac{n}{a_2^{-1}n+b_2})'$}\right),&\text{if $\frac{n}{a_2^{-1}n+b_2}\geq1$},
		\end{cases}\quad k_2\in\begin{cases}
		(\mbox{$\frac{n}{a_2^{-1}n+b_2}$},\infty],\quad&\text{if $\frac{n}{a_1^{-1}n+b_1}<1$},\\
		\left(\mbox{$\frac{n}{a_2^{-1}n+b_2},(\frac{n}{a_1^{-1}n+b_1})'$}\right),&\text{if $\frac{n}{a_1^{-1}n+b_1}\geq1$},
		\end{cases}
	\end{equation*}
	\eqref{e2.7} is true for those $p_i$ with $k_1\leq p_2'$ and $k_2\leq p_1'$, because such $p_i$ exist in $[1,2)\cap(\frac{n}{a_i^{-1}n+b_i},2)$.
\end{remark}

\section{Proofs of Theorems \ref{thm3} and \ref{thm4}}\label{sec3}

\begin{proof}[Proof of Theorem \ref{thm3}]
Interpolating the dispersive estimate $\|e^{it\Delta}\|_{L^1(\mathbb{R}^n)\rightarrow L^\infty(\mathbb{R}^n)}\lesssim|t|^{-\frac n2}$ and $L^2$ isometry of $e^{it\Delta}$, we have when $p\in[1,2]$ that
\begin{equation*}
	\|e^{it\Delta}u_0\|_{L^{p'}(\mathbb{R}^n)}\lesssim(|t|^{-\frac n2})^{\frac2p-1}\|u_0\|_{L^p(\mathbb{R}^n)},\quad\|e^{it\Delta}u_0\|_{L^p(\mathbb{R}^n)}\gtrsim(|t|^{-\frac n2})^{\frac{2}{p'}-1}\|u_0\|_{L^{p'}(\mathbb{R}^n)}.
\end{equation*}
Set $f_1=u_0$, $f_2=e^{it\Delta}u_0$ and consequently $A=B=|t|^{-\frac n2}$ in Corollary \ref{cor1}. Since
\begin{equation*}
	\left(\frac{B^{\frac{2}{p_2'}-1}}{A^{\frac{2}{p_1}-1}}\right)^\frac{1}{\frac{1}{p_1}-\frac{1}{p_2'}}=|t|^n,
\end{equation*}
we then obtain \eqref{e4.2}.
\end{proof}

\begin{proof}[Proof of Theorem \ref{thm4}]
The solution of \eqref{e4.4} is given by
\begin{equation}\label{e4.5}
	\begin{cases}
		u=\cos(t\sqrt{-\Delta})u_0+\frac{\sin(t\sqrt{-\Delta})}{\sqrt{-\Delta}}u_1,\\
		\partial_tu=-\sin(t\sqrt{-\Delta})\sqrt{-\Delta}u_0+\cos(t\sqrt{-\Delta})u_1.
	\end{cases}
\end{equation}
Since the frequency-localized dispersive estimate for the half-wave propagator (see Koch-Tataru-Vi\c{s}an \cite[Chapter 2 in Part 3]{KTV}) is known when $p\in[1,2]$:
\begin{equation*}
	\left\|e^{\pm it\sqrt{-\Delta}}P_Nf\right\|_{L^{p'}(\mathbb{R}^n)}\lesssim(1+|t|N)^{-\frac{n-1}{2}(\frac2p-1)}N^{n(\frac2p-1)}\|P_Nf\|_{L^p(\mathbb{R}^n)},
\end{equation*}
we have for $p\in[1,2]$ that
\begin{equation*}\small
	\left\||\sqrt{-\Delta}P_Nu|+|\partial_tP_Nu|\right\|_{L^{p'}(\mathbb{R}^n)}\lesssim\left((1+|t|N)^{-\frac{n-1}{2}}N^n\right)^{\frac2p-1}\left\||\sqrt{-\Delta}P_Nu_0|+|P_Nu_1|\right\|_{L^p(\mathbb{R}^n)},
\end{equation*}
and the inverse of \eqref{e4.5} also gives the other direction
\begin{equation*}\small
	\left\||\sqrt{-\Delta}P_Nu|+|\partial_tP_Nu|\right\|_{L^{p}(\mathbb{R}^n)}\gtrsim\left((1+|t|N)^{-\frac{n-1}{2}}N^n\right)^{\frac{2}{p'}-1}\left\||\sqrt{-\Delta}P_Nu_0|+|P_Nu_1|\right\|_{L^{p'}(\mathbb{R}^n)}.
\end{equation*}
Now we can apply Corollary \ref{cor1} and Remark \ref{rk1} with $f_1=|\sqrt{-\Delta}P_Nu_0|+|P_Nu_1|$ and $f_2=|\sqrt{-\Delta}P_Nu|+|\partial_tP_Nu|$ to obtain \eqref{e1.22}.
\end{proof}

\section{Proof of Theorems \ref{thm5} and \ref{thm5'}}\label{sec4}

	\begin{proof}[Proof of Theorem \ref{thm5}]
		
		It is easy to check from the dispersive estimate $\|e^{it\Delta}u_0\|_{L^{p'}(\mathbb{R}^n)}\lesssim t^{-\frac n2(1-\frac{2}{p'})}\|u_0\|_{L^p(\mathbb{R}^n)}$ ($p\in[1,2]$) that
		\begin{equation*}
			\|e^{it\Delta}u_0\|_{L^{p'}([0,T]\times\mathbb{R}^n)}\lesssim_{p,T}\|u_0\|_{L^p(\mathbb{R}^n)},\quad\text{if }\mbox{$\frac{2n+2}{n+2}<p\leq2$}.
		\end{equation*}
		Interpolating this with the Strichartz estimate $\|e^{it\Delta}u_0\|_{L^\frac{2n+4}{n}([0,T]\times\mathbb{R}^n)}\lesssim\|u_0\|_{L^2(\mathbb{R}^n)}$, we further have
		\begin{equation*}
			\|e^{it\Delta}u_0\|_{L^q([0,T]\times\mathbb{R}^n)}\lesssim_{T,p,q}\|u_0\|_{L^p(\mathbb{R}^n)},\quad(\mbox{$\frac1p,\frac1q$})\in\Delta_n,
		\end{equation*}
		where $\Delta_n$ is the closed triangle with vertices $(\frac12,\frac{n}{2n+4})$, $(\frac12,\frac12)$, $(\frac{n+2}{2n+2},\frac{n}{2n+2})$ minus the point $(\frac{n+2}{2n+2},\frac{n}{2n+2})$ and the open edge connecting $(\frac12,\frac{n}{2n+4})$ and $(\frac{n+2}{2n+2},\frac{n}{2n+2})$. (See Figure 1 below.)
		
		Combing this with \eqref{e1.24}, we now have
		\begin{equation}\label{e5.2}
			\begin{cases}
				\left\|\mathbbm{1}_{[0,T]}(t)\mathbbm{1}_\Omega(x)e^{it\Delta}u_0\right\|_{L^{q}(\mathbb{R}^{1+n})}\lesssim_{T,p,q}\|u_0\|_{L^p(\mathbb{R}^n)},\quad\text{if }(\mbox{$\frac1p,\frac1q$})\in\Delta_n,\\
				\left\|\mathbbm{1}_{[0,T]}(t)\mathbbm{1}_\Omega(x)e^{it\Delta}u_0\right\|_{L^{2}(\mathbb{R}^{1+n})}\gtrsim_{T,\Omega}\|u_0\|_{L^2(\mathbb{R}^n)},
			\end{cases}
		\end{equation}
		so we can apply Theorem \ref{thm1} with $n_1=n$, $n_2=n+1$, $f_1(x)=u_0(x)$ and $f_2(t,x)=\mathbbm{1}_{[0,T]}(t)\mathbbm{1}_\Omega(x)e^{it\Delta}u_0(x)$. One checks that \eqref{e1.25} guarantees the existence of a pair $(p,q)$ with $(\mbox{$\frac1p,\frac1q$})\in\Delta_n$, such that \eqref{e2.3} holds if $m_1=2$, $m_2=q$, $q_1=p$ and $q_2=2$, i.e.
		\begin{equation}\label{eee4.2}
			\begin{split}
				\mbox{$\frac{n}{a_1^{-1}n+b_1}<k_1\leq2$},& \quad\mbox{$\frac{n}{a_1^{-1}n+b_1}<p<2$},\\
				\mbox{$\frac{n+1}{a_2^{-1}(n+1)+b_2}<k_2\leq q$},&\quad\mbox{$\frac{n+1}{a_2^{-1}(n+1)+b_2}<2<q$}.
			\end{split}
		\end{equation}
		So Theorem \ref{thm1} with \eqref{e5.2} gives the statement.
	\end{proof}
	
	\begin{center}
	\begin{tikzpicture}
		[
		labelstyle/.style={font=\footnotesize},
		captionstyle/.style={font=\scriptsize, anchor=north}] 
		\begin{axis}[
			xlabel=$\frac{1}{p}$,
			ylabel=$\frac{1}{q}$,
			xlabel style={at={(1.05,0)}, anchor=north east},
			ylabel style={at={(0,0.95)}, anchor=south east, rotate=270},
			xmin=0, xmax=1.1,
			ymin=0, ymax=1.1,
			xtick={0.5, 0.625, 1},
			xticklabels={$\frac{1}{2}$, $\frac{n+2}{2n+2}$, 1},  
			ytick={0.3, 0.375, 0.5, 1},
			yticklabels={$\frac{n}{2n+4}$, $\frac{n}{2n+2}$, $\frac{1}{2}$, 1},  
			axis lines=left,
			grid=none,
			width=8cm,
			height=8cm,
			]
			
			\fill[gray!30] (0.5,0.3) -- (0.5,0.5) -- (0.625,0.375) -- cycle;
			
			\draw[solid] (0.5,0.3) -- (0.5,0.5);
			\draw[solid] (0.5,0.5) -- (0.625,0.375);
			\draw[dashed] (0.5,0.3) -- (0.625,0.375);
			
			\draw[dashed] (0.5,0.5) -- (0,0.5);
			\draw[dashed] (0.5,0.3) -- (0.5,0);
			\draw[dashed] (0.5,0.3) -- (0,0.3);
			\draw[dashed] (0,0.375) -- (0.625,0.375);
			\draw[dashed] (0.625,0) -- (0.625,0.375);
			\draw[dashed] (0,1) -- (0.5,0.5);
			\draw[dashed] (0.625,0.375) -- (1,0);
			
			\node[circle, fill, inner sep=0.8pt, label={[labelstyle]below left:$B$}] at (0.5,0.3) {};
			\node[circle, fill, inner sep=0.8pt, label={[labelstyle]above:$A$}] at (0.5,0.5) {};
			\node[circle, draw=black, fill=white, inner sep=0.8pt, label={[labelstyle]right:$C$}] at (0.625,0.375) {};
		\end{axis}
		
		\node[label={[labelstyle]below left:$0$}] at (0.1,0.1) {}; 
		
		\node[captionstyle] at (current bounding box.south) 
		{Figure1: $\Delta_n$ is the closed triangle $ABC$ without point $C$ and without the open edge $BC$.};
		
	\end{tikzpicture}
	\end{center}

	\begin{proof}[Proof of Theorem \ref{thm5'}]
		The heat kernel $e^{t\Delta}(x-y)\sim t^{-\frac n2}e^{-\frac{|x-y|^2}{4t}}$ first gives
		\begin{equation*}
			\|e^{t\Delta}u_0\|_{L^q(\mathbb{R}^n)}\lesssim t^{-\frac n2(\frac1p-\frac1q)}\|u_0\|_{L^p(\mathbb{R}^n)},\quad\text{if }1\leq p\leq q\leq\infty,
		\end{equation*}
		so it is easy to check that
		\begin{equation}\label{ee4.2}
			\|e^{t\Delta}u_0\|_{L_{t,x}^q([0,T]\times\mathbb{R}^n)}\lesssim_T\|u_0\|_{L^p(\mathbb{R}^n)},\quad\text{if }1\leq p\leq q<\mbox{$\frac{n+2}{n}$}p~\text{or}~p=q=\infty.
		\end{equation}
		Combining \eqref{e127}, now \eqref{e1.25} guarantees the existence of a pair $(p,q)$ in the range described in \eqref{ee4.2}, such that \eqref{e2.3} holds if $m_1=2$, $m_2=q$, $q_1=p$ and $q_2=2$, i.e. \eqref{eee4.2} holds. The statement follows at once if we apply Theorem \ref{thm1} with $n_1=n$, $n_2=n+1$, $f_1(x)=u_0(x)$ and $f_2(t,x)=\mathbbm{1}_{[0,T]}(t)\mathbbm{1}_\Omega(x)e^{t\Delta}u_0(x)$.
	\end{proof}

\section{Proof of Proposition \ref{prop6}}\label{sec5}

The $L^2$ theory first tells us the following (see Linares-Ponce [Theorem 5.2 and Corollary 5.2]\cite{LG} with no change of the proofs).
\begin{lemma}\label{lm3}
	If $m\in(1,1+\frac4n]$, $\phi\in L^\infty(\mathbb{R}^{1+n})$ is real-valued, $u_0\in L^2(\mathbb{R}^n)$, and in addition $\|u_0\|_{L^2}\ll1$ when $m=1+\frac4n$, then the unique solution $u\in C(\mathbb{R}:L^2(\mathbb{R}^n))$ to \eqref{e6.3} exists.
\end{lemma}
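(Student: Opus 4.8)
The plan is to construct the solution by a contraction mapping applied to the Duhamel formula in a Strichartz space, and then to globalize using the conservation of the $L^2$ mass --- which is available precisely because $\phi$ is real-valued --- so that the argument is the one carried out in Linares--Ponce \cite{LG} and only its skeleton needs to be recalled. First I would fix the $L^2$-admissible pair $(q,r)$ with $r=m+1$ and $\frac{2}{q}+\frac{n}{r}=\frac{n}{2}$, that is $q=\frac{4(m+1)}{n(m-1)}$, and observe that the hypothesis $1<m\le 1+\frac4n$ is exactly equivalent to $q\ge m+1\ (\ge 2)$, with equality iff $m=1+\frac4n$, so $(q,r)$ is admissible (and never the forbidden endpoint). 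For an interval $I\ni 0$ I would set up the complete metric space $X_I$ consisting of the ball of radius $2C_0\|u_0\|_{L^2}$ in $C(I:L^2(\mathbb{R}^n))\cap L^q(I:L^r(\mathbb{R}^n))$, with the distance induced by the $L^q(I:L^r)$ norm, and study the map
\[
	\Phi(u)(t)=e^{it\Delta}u_0+i\int_0^t e^{i(t-s)\Delta}\,\phi(s)\,|u(s)|^{m-1}u(s)\,ds .
\]

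Next I would close the contraction by the standard estimates. The homogeneous Strichartz estimate puts $e^{it\Delta}u_0$ in $X_I$ with constant $C_0$; for the nonlinear term, the inhomogeneous Strichartz estimate (with the dual admissible pair again $(q,r)$, so the nonlinearity is measured in $L_t^{q'}L_x^{r'}$), the pointwise bound $\bigl|\phi\,|u|^{m-1}u\bigr|\le\|\phi\|_{L^\infty}|u|^m$, Hölder in $x$ so that $\bigl\||u|^m\bigr\|_{L_x^{r'}}=\|u\|_{L_x^{m+1}}^m$, and Hölder in $t$ on $I$ give
\[
	\Bigl\|\,\int_0^t e^{i(t-s)\Delta}\phi|u|^{m-1}u\,ds\,\Bigr\|_{C(I:L^2)\cap L_t^q(I:L^r)}\lesssim\|\phi\|_{L^\infty}\,|I|^{\sigma}\,\|u\|_{L_t^q(I:L^r)}^{m},\qquad \sigma=1-\tfrac{m+1}{q}\ge 0,
\]
with $\sigma>0$ when $m<1+\frac4n$ and $\sigma=0$ when $m=1+\frac4n$; the matching difference bound follows in the same way from $\bigl||u|^{m-1}u-|v|^{m-1}v\bigr|\lesssim(|u|^{m-1}+|v|^{m-1})|u-v|$. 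Hence, in the subcritical range, choosing $|I|$ small depending only on $n$, $m$, $\|\phi\|_{L^\infty}$ and $\|u_0\|_{L^2}$ makes $\Phi$ a contraction on $X_I$, yielding a unique local solution with lifespan controlled by $\|u_0\|_{L^2}$ alone; in the critical case the factor $|I|^\sigma$ is gone, but $\|e^{it\Delta}u_0\|_{L_t^q(I:L^r)}\to0$ as $|I|\to0$ for fixed $u_0$ still provides a short-time contraction, while under the smallness hypothesis $\|u_0\|_{L^2}\ll1$ the global bound $\|e^{it\Delta}u_0\|_{L_t^q(\mathbb{R}:L^r)}\le C_0\|u_0\|_{L^2}$ lets the contraction close directly on $I=\mathbb{R}$.

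Finally I would globalize in the subcritical case: pairing the equation with $\bar u$, integrating, and taking imaginary parts, the fact that $\phi$ is real gives $\frac{d}{dt}\|u(t)\|_{L^2}^2=0$, so $\|u(t)\|_{L^2}\equiv\|u_0\|_{L^2}$; since the local existence time depends only on this conserved quantity, the solution extends to all of $\mathbb{R}$, and uniqueness in $C(\mathbb{R}:L^2)$ is read off from the same estimates. I expect the genuine subtlety to be the $L^2$-critical endpoint $m=1+\frac4n$: there is no power of $|I|$ to absorb the nonlinearity, hence no norm-only local theory, and the global statement really does require $\|u_0\|_{L^2}\ll1$ --- which is exactly why that hypothesis is imposed. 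A second, routine point to dispatch is the rigorous justification of the mass identity for merely $L^2$ data, obtained by approximating $u_0$ in $L^2$ by $\mathscr{S}(\mathbb{R}^n)$ data, using the conservation law at higher regularity, and passing to the limit through the well-posedness bounds just established.
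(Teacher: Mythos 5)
Your proposal is correct and is essentially the paper's own argument: the authors prove Lemma \ref{lm3} simply by citing the $L^2$ theory of Linares--Ponce (Theorem 5.2 and Corollary 5.2 of \cite{LG}, ``with no change of the proofs''), which is exactly the Strichartz contraction with the admissible pair $r=m+1$, $q=\frac{4(m+1)}{n(m-1)}$, globalized by mass conservation in the subcritical range and by the small-data argument at the critical exponent $m=1+\frac4n$ that you describe. The bounded real coefficient $\phi$ only contributes the factor $\|\phi\|_{L^\infty}$ in the nonlinear estimates and preserves the mass identity, as you note, so no further changes are needed.
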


We further establish a simple result for the solution in $L^p$. For $p\in(2,\infty]$, define $X_p=\{f(t,x);\|f\|_{X_p}<\infty\}$ where
\begin{equation*}
	\|f\|_{X_p}=\sup_{t\in\mathbb{R}}|t|^{\frac n2(1-\frac2p)}\|f(t,\cdot)\|_{L^p(\mathbb{R}^n)}.
\end{equation*}

\begin{lemma}\label{lm4}
	Assume that $p\in\begin{cases}
	(2,\infty],\,&n=1,\\
	(2,\frac{2n}{n-2}),\,&n\geq2,
	\end{cases}$ $m\in(1,p-1]$, $r=\frac{p}{p-m-1}$ (i.e. $\frac1r+\frac mp=\frac{1}{p'}$), and $\phi(t,x)$ is a measurable function fulfilling
\begin{align*}
\|\phi(t,x)\|_{L_x^r(\mathbb{R}^{n})}\le C |t|^{\frac{nm}{2}(1-\frac2p)}\eta(t),\quad t\in\mathbb{R},
\end{align*}
for some constant $C>0$ and some non-negative valued function $\eta(t)$ satisfying
\begin{align*}
\int_{\mathbb{R}}\eta(t)dt\lesssim 1\quad\text{and}\quad\sup\limits_{\frac{{|t|}}{2} \le |s| \le |t|,t\in\mathbb{R}}|t|\eta(s)\lesssim 1.
\end{align*}
Then there exists $\epsilon=\epsilon(n,\phi,p,m)>0$ such that when $\|u_0\|_{L^{p'}(\mathbb{R}^n)}\leq\epsilon$, \eqref{e6.3} has a unique solution $u$ in $X_p$, and
	\begin{equation*}
		\|u\|_{X_p}\lesssim_{n,\phi,p,m}\|u_0\|_{L^{p'}(\mathbb{R}^n)}.
	\end{equation*}
\end{lemma}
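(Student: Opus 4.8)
The plan is to run a standard Banach fixed point argument for the Duhamel operator
\[
\Phi(u)(t)=e^{it\Delta}u_0 - i\int_0^t e^{i(t-s)\Delta}\bigl(\phi(s)|u(s)|^{m-1}u(s)\bigr)\wrt s
\]
in the complete metric space given by a closed ball of $X_p$. The key tools are the dispersive estimate $\|e^{it\Delta}g\|_{L^p(\mathbb{R}^n)}\lesssim |t|^{-\frac n2(1-\frac2p)}\|g\|_{L^{p'}(\mathbb{R}^n)}$, valid for $p\in[2,\infty]$, together with Hölder's inequality in the $x$ variable. First I would bound the linear term: since $\|e^{it\Delta}u_0\|_{L^p}\lesssim|t|^{-\frac n2(1-\frac2p)}\|u_0\|_{L^{p'}}$, we get $\|e^{it\Delta}u_0\|_{X_p}\lesssim\|u_0\|_{L^{p'}(\mathbb{R}^n)}$. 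Next, for the Duhamel term, apply the dispersive estimate with exponents $(p,p')$ inside the time integral, then Hölder in $x$ with $\frac1{p'}=\frac1r+\frac mp$ to split $\|\phi(s)|u(s)|^{m-1}u(s)\|_{L^{p'}}\le\|\phi(s)\|_{L^r}\|u(s)\|_{L^p}^m$; using the hypothesis on $\|\phi(s)\|_{L^r}$ and the definition of $\|u\|_{X_p}$ one sees the powers of $|s|$ combine to give a clean $|s|^{-\frac n2(1-\frac2p)}\eta(s)$ times $\|u\|_{X_p}^m$ (one should check the arithmetic of the exponents: $\frac{nm}{2}(1-\frac2p)$ from $\phi$ minus $\frac{nm}{2}(1-\frac2p)$ from $\|u(s)\|_{L^p}^m$ leaves nothing, and the $|t-s|$ factor from the propagator supplies the needed $|t|^{-\frac n2(1-\frac2p)}$ after multiplying by $|t|^{\frac n2(1-\frac2p)}$).

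The crux is the time integral estimate
\[
|t|^{\frac n2(1-\frac2p)}\int_0^{|t|}|t-s|^{-\frac n2(1-\frac2p)}|s|^{-\frac n2(1-\frac2p)}\eta(s)\wrt s\lesssim 1.
\]
Here the two-part hypothesis on $\eta$ is exactly what is needed: split the integral into the region $|s|\le |t|/2$ (where $|t-s|\sim|t|$, so the $|t|^{\frac n2(1-\frac2p)}|t-s|^{-\frac n2(1-\frac2p)}$ factors cancel and we are left with $\int|s|^{-\frac n2(1-\frac2p)}\eta(s)\wrt s$, which needs $\int_{\mathbb{R}}\eta\lesssim1$ provided $\frac n2(1-\frac2p)<1$, i.e. $p<\frac{2n}{n-2}$ for $n\ge2$, matching the hypothesis, and is automatic for $n=1$), and the region $|t|/2\le|s|\le|t|$ (where $|s|\sim|t|$, so $|t|^{\frac n2(1-\frac2p)}|s|^{-\frac n2(1-\frac2p)}\sim1$ and $\eta(s)\lesssim|t|^{-1}$ by the second hypothesis, leaving $|t|^{-1}\int_{|t|/2}^{|t|}|t-s|^{-\frac n2(1-\frac2p)}\wrt s\lesssim|t|^{-1}|t|^{1-\frac n2(1-\frac2p)}=|t|^{-\frac n2(1-\frac2p)}$, which is bounded near $t=0$ but one must be slightly careful — actually multiplying back gives a bounded contribution). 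This is the step I expect to be the main obstacle, both in getting the case division right and in handling the endpoint $p=\infty$ where $\frac n2(1-\frac2p)=\frac n2$ and $n=1$ forces a direct check.

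Once the a priori bound $\|\Phi(u)\|_{X_p}\le C_0\|u_0\|_{L^{p'}}+C_1\|u\|_{X_p}^m$ is established, choosing $\epsilon$ small enough that $C_0\epsilon\le R/2$ and $C_1 R^{m-1}\le 1/2$ for a suitable radius $R$ makes $\Phi$ map the ball of radius $R$ to itself. The contraction estimate is analogous: $\||u|^{m-1}u-|v|^{m-1}v\|_{L^{p'}}\lesssim(\|u\|_{L^p}^{m-1}+\|v\|_{L^p}^{m-1})\|u-v\|_{L^p}$ by the elementary pointwise inequality $\bigl||u|^{m-1}u-|v|^{m-1}v\bigr|\lesssim(|u|^{m-1}+|v|^{m-1})|u-v|$ (valid since $m>1$), and then the same time-integral bound yields $\|\Phi(u)-\Phi(v)\|_{X_p}\le \tfrac12\|u-v\|_{X_p}$ after shrinking $R$ (hence $\epsilon$) further. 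Banach's fixed point theorem then produces the unique solution in $X_p$ with $\|u\|_{X_p}\lesssim\|u_0\|_{L^{p'}}$; uniqueness in the ball is immediate from the contraction, and agreement with the $C(\mathbb{R}:L^2)$ solution from Lemma \ref{lm3} (when $u_0\in L^2$ as well) follows from a standard uniqueness-in-overlap argument, though the statement of Lemma \ref{lm4} only claims uniqueness within $X_p$.
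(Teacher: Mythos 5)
Your overall scheme is exactly the paper's: a Banach fixed point argument for the Duhamel map in the ball of $X_p$, using the dispersive bound $L^{p'}\to L^p$ and H\"older with $\frac1{p'}=\frac1r+\frac mp$, then a two-region estimate of the time integral, and the same contraction step. However, the step you yourself flag as the crux is garbled, and as written it does not go through. Your parenthetical exponent count is correct: $\|\phi(s)\|_{L^r}\|u(s)\|_{L^p}^m\lesssim |s|^{\frac{nm}{2}(1-\frac2p)}\eta(s)\cdot|s|^{-\frac{nm}{2}(1-\frac2p)}\|u\|_{X_p}^m=\eta(s)\|u\|_{X_p}^m$, so the powers of $|s|$ cancel completely. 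Yet the displayed "crux" integral reinstates a spurious weight $|s|^{-\alpha}$ (with $\alpha=\frac n2(1-\frac2p)$), and your treatment of both regions is then wrong for that integral: on $|s|\le|t|/2$ you bound $\int|s|^{-\alpha}\eta(s)\,ds$ by $\int\eta$ "provided $\alpha<1$", which is not legitimate since $|s|^{-\alpha}\ge1$ near $s=0$ and $\eta$ is not assumed bounded there (e.g.\ $\eta(s)\sim|s|^{-1/2}$ near $0$ is admissible under the stated hypotheses, and then this integral diverges for $\alpha\ge\frac12$); on $|t|/2\le|s|\le|t|$ your computation leaves a factor $|t|^{-\alpha}$, which is unbounded as $t\to0$ (not "bounded near $t=0$"), and there is no remaining prefactor to "multiply back" since $|t|^{\alpha}$ was already spent against $|s|^{-\alpha}$.

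The fix is exactly what your own bookkeeping dictates, and it is what the paper does: drop the spurious $|s|^{-\alpha}$ and estimate
\begin{equation*}
|t|^{\alpha}\int_0^{t}|t-s|^{-\alpha}\eta(s)\,ds
\end{equation*}
by splitting at $|s|=|t|/2$. On the near-origin piece $|t-s|\sim|t|$, the $|t|^{\alpha}|t-s|^{-\alpha}$ factors cancel and $\int_{\mathbb{R}}\eta\lesssim1$ suffices, with no condition on $\alpha$ at all. The condition $\alpha<1$ (i.e.\ $p<\frac{2n}{n-2}$ for $n\ge2$; automatic for $n=1$, including $p=\infty$) is needed in the other piece, where $\eta(s)\lesssim|t|^{-1}$ by the second hypothesis and $\int_{t/2}^{t}|t-s|^{-\alpha}ds\sim|t|^{1-\alpha}$, so the total is $|t|^{\alpha}\cdot|t|^{-1}\cdot|t|^{1-\alpha}=1$. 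So you attributed the role of $\alpha<1$ to the wrong region, and your claimed endpoint worry at $p=\infty$, $n=1$ disappears since $\alpha=\frac12<1$ there. With the crux estimate corrected in this way, the rest of your argument (self-mapping of the ball, the pointwise inequality for the difference of nonlinearities, contraction, and the resulting bound $\|u\|_{X_p}\lesssim\|u_0\|_{L^{p'}}$) matches the paper's proof.
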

\begin{proof}
	Define operator $T$ by
	\begin{equation*}
		(Tf)(t,x)=e^{it\Delta}u_0+i\int_0^te^{i(t-s)\Delta}\phi(s,x)|f(s,x)|^{m-1}f(s,x)ds.
	\end{equation*}
	We first show that if $\|u_0\|_{L^{p'}(\mathbb{R}^n)}\leq\epsilon$ and $\epsilon$ is small, then $T$ maps the closed ball with radius $2K\epsilon$ in $X_p$ into itself for some constant $K=K(n,\phi,p)>0$. Using $\|e^{it\Delta}\|_{L^{p'}(\mathbb{R}^n)\rightarrow L^p(\mathbb{R}^n)}\lesssim|t|^{-\frac n2(1-\frac2p)}$, we deduce
	\begin{equation*}
		\begin{split}
			\|Tf(t,\cdot)\|_{L^p(\mathbb{R}^n)}&\lesssim|t|^{-\frac n2(1-\frac2p)}\|u_0\|_{L^{p'}(\mathbb{R}^n)}+\left|\int_0^{t}|t-s|^{-\frac n2(1-\frac2p)}\left\|\phi(s,\cdot)|f(s,\cdot)|^m\right\|_{L^p(\mathbb{R}^n)}ds\right|\\
			&\lesssim|t|^{-\frac n2(1-\frac2p)}\|u_0\|_{L^{p'}(\mathbb{R}^n)}+\left|\int_0^{t}|t-s|^{-\frac n2(1-\frac2p)}\|\phi(s,\cdot)\|_{L^r(\mathbb{R}^n)}\|f(s,\cdot)\|_{L^p(\mathbb{R}^n)}^mds\right|\\
			&\lesssim_\phi|t|^{-\frac n2(1-\frac2p)}\|u_0\|_{L^{p'}(\mathbb{R}^n)}+\|f\|_{X_p}^m\left|\int_0^{t}|t-s|^{-\frac n2(1-\frac2p)}\eta(s)ds\right|,
		\end{split}
	\end{equation*}
	and thus
	\begin{equation*}
		\begin{split}
			\|Tf\|_{X_p}&\lesssim_\phi\|u_0\|_{L^{p'}(\mathbb{R}^n)}+\|f\|^m_{X_p}\left|\int_0^{t}|t|^{\frac n2(1-\frac2p)}|t-s|^{-\frac n2(1-\frac2p)}\eta(s)ds\right|\\
			&\lesssim_{\phi,n,p}\|u_0\|_{L^{p'}(\mathbb{R}^n)}+\|f\|^m_{X_p},
		\end{split}
	\end{equation*}
	where we have used
	\begin{align*}
		\left|\int_0^{\frac{t}{2}}|t|^{\frac n2(1-\frac2p)}|t-s|^{-\frac n2(1-\frac2p)}\eta(s)ds\right|\lesssim_{n,p}\left|\int_0^{\frac{t}{2}} \eta(s)ds\right|\lesssim_{n,p}1,\quad \forall t\in\mathbb{R},
	\end{align*}
	and
	\begin{equation*}
		\begin{split}
			\left|\int^t_{\frac{t}{2}}|t|^{\frac n2(1-\frac2p)}|t-s|^{-\frac n2(1-\frac2p)}\eta(s)ds\right|&\lesssim|t|^{\frac n2(1-\frac2p)}\sup_{\frac{|t|}{2}\leq|s|\leq|t|}\eta(s)\left|\int^t_{\frac{t}{2}}|t-s|^{-\frac n2(1-\frac2p)} ds\right|\\
			&\lesssim_{n,p}|t| \sup_{\frac{|t|}{2}\leq|s|\leq|t|}\eta(s)\\
			&\lesssim_{n,p}1,\quad\forall t\in\mathbb{R}.
		\end{split}
	\end{equation*}
	Now we have for some $K=K(n,\phi,p)>0$ that
	\begin{equation*}
		\begin{split}
			\|Tf\|_{X_p}&\leq K\left(\|u_0\|_{L^{p'}(\mathbb{R}^n)}+\|f\|_{X_p}^{m}\right)\\
			&\leq K(\epsilon+(2K\epsilon)^{m})\\
			&\leq2K\epsilon,
		\end{split}
	\end{equation*}
	provided that $\epsilon\ll_{K,m}1$. It is parallel to show that $T$ contracts, so solution $u$ exists in $X_p$ by the fixed point argument, satisfying
	\begin{equation*}
		\|u\|_{X_p}\lesssim K\|u_0\|_{L^{p'}(\mathbb{R}^n)}.
	\end{equation*}
\end{proof}

Combining Lemma \ref{lm3} and Lemma \ref{lm4}, we can now apply Theorem \ref{thm1} with $f_1(x)=u_0(x)$ and $f_2(x)=u(t,x)$ to prove Proposition \ref{prop6}.

\begin{proof}[Proof of Proposition \ref{prop6}]
	Note that since $\phi$ is real-valued, the $L^2$ mass of the solution is conserved. Now Lemma \ref{lm3} and Lemma \ref{lm4} are applicable to identify solution $u$ with property 
	\begin{equation}\label{e6.2}
		\begin{cases}
			\|u(t,\cdot)\|_{L^{p}(\mathbb{R}^n)}\lesssim|t|^{-\frac n2(1-\frac2p)}\|u_0\|_{L^{p'}(\mathbb{R}^n)},\\
			\|u(t,\cdot)\|_{L^2(\mathbb{R}^n)}=\|u_0\|_{L^2(\mathbb{R}^n)},
		\end{cases}
	\end{equation}
	and we are only left to show that Theorem \ref{thm1} is applicable, if we check \eqref{e2.3} by
	\begin{equation*}
		\begin{split}
			\mbox{$\frac{n}{a_1^{-1}n+b_1}$}<k_1\leq2,\quad\mbox{$\frac{n}{a_1^{-1}n+b_1}$}<p'<2,\\
			\mbox{$\frac{n}{a_2^{-1}n+b_2}$}<k_2\leq p,\quad\mbox{$\frac{n}{a_2^{-1}n+b_2}$}<2<p.
		\end{split}
	\end{equation*}
\end{proof}

\begin{remark}
	When $\phi(t,x)\equiv1$, it is possible to still deduce the first dispersive type estimate in \eqref{e6.2} (with constant depending on $\|u_0\|_{L^2(\mathbb{R}^n)}$) in some spatial dimensions and for some range of $p$, even without the small data assumption in the mass-critical case $m=1+\frac4n$, for example see Fan-Killip-Visan-Zhao \cite{FKVZ}.
\end{remark}

\noindent
\section*{Acknowledgements}
This work is supported by the National Key R\&D Program of China under the grant 2023YFA1010300. T. Huang is partially supported by the National Natural Science Foundation of China under the grants 12101621 and 12371244. Z. Li is partially supported by the National Natural Science Foundation of China under the grants 12422110 and 12371244.

T. Huang Thanks Chenjie Fan, Baoping Liu, Yakun Xi and Cheng Zhang for some helpful discussion on the topic. He also thanks Prof. Nicolas Burq for pointing out a mistake in Theorem \ref{thm5'} for an earlier version of the manuscript.

\end{document}